\documentclass[1p]{elsarticle}

\usepackage{amssymb, amsmath}

\usepackage{eucal}
\usepackage[utf8]{inputenc}

\usepackage{float}
\restylefloat{figure}

\usepackage{bbm}
\usepackage{dsfont}

\usepackage{hyperref}

%\modulolinenumbers[5]

\journal{Stochastic Processes and their Applications}

\usepackage{color}

\let\epsilon=\varepsilon
\let\phi=\varphi

%------------------------------------------------
%
%             Symbols in "Fraktur"
%
%------------------------------------------------

%------------------------------------------------
%
%            Symbols in "mathbb"
%
%------------------------------------------------
\newcommand{\E}{\mathds{E}}
\newcommand{\V}{\mathds{V}}
\newcommand{\N}{\mathds{N}}
\newcommand{\R}{\mathds{R}}

\def\P{{\mathds P}}
\newcommand{\ind}{{\mathds 1}}

%------------------------------------------------
%
%            Symbols in "mathcal"
%
%------------------------------------------------

\newcommand{\Nc}{\mathcal{N}}

%------------------------------------------------
%
%           Letters in bold
%
%------------------------------------------------
\newcommand{\xb}{{\bf x}}

\newcommand{\yb}{{\bf y}}

\newcommand{\Nb}{{\bf N}}

\newcommand{\Ent}{{\rm Ent}}

\newcommand{\Cf}{\mathfrak{C}}

\newtheorem{thm}{Theorem}[section]
\newtheorem{lem}[thm]{Lemma}
\newtheorem{prop}[thm]{Proposition}
\newtheorem{cor}[thm]{Corollary}
\newdefinition{rem}[thm]{Remark}
\newproof{proof}{Proof}

% `Elsevier LaTeX' style - references list sorted alphabetically
\bibliographystyle{elsart-num-sort}

\begin{document}

\begin{frontmatter}

\title{Concentration for Poisson functionals:\\ component counts in random geometric graphs \tnoteref{mytitlenote}}
\tnotetext[mytitlenote]{\copyright \ 2015. This manuscript version is made available under the CC-BY-NC-ND 4.0 license \href{http://creativecommons.org/licenses/by-nc-nd/4.0/}{http://creativecommons.org/licenses/by-nc-nd/4.0/}.}

%% Group authors per affiliation:
\author{Sascha Bachmann\corref{mycorrespondingauthor}}
\address{Institute for Mathematics, Osnabr\"uck University, 49069 Osnabr\"uck, Germany}
%\address{Institut f\"ur Mathematik, Universit\"at Osnabr\"uck, Germany}
%\fntext[myfootnote]{The author is partially supported by the German Research Foundation DFG-GRK 1916.}

%% or include affiliations in footnotes:
%\author[mymainaddress,mysecondaryaddress]{Elsevier Inc}
%\ead[url]{www.elsevier.com}

%\author[mysecondaryaddress]{Global Customer Service\corref{mycorrespondingauthor}}
\cortext[mycorrespondingauthor]{Corresponding author}
\ead{sascha.bachmann@uni-osnabrueck.de}

%\address[mymainaddress]{1600 John F Kennedy Boulevard, Philadelphia}
%\address[mysecondaryaddress]{360 Park Avenue South, New York}

\begin{abstract}
Upper bounds for the probabilities $\P(F\geq \E F + r)$ and $\P(F\leq \E F - r)$ are proved, where $F$ is a certain component count associated with a random geometric graph built over a Poisson point process on $\R^d$. The bounds for the upper tail decay exponentially, and the lower tail estimates even have a Gaussian decay.

For the proof of the concentration inequalities, recently developed methods based on logarithmic Sobolev inequalities are used and enhanced. A particular advantage of this approach is that the resulting inequalities even apply in settings where the underlying Poisson process has infinite intensity measure.
\bigskip

\noindent{\it MSC:} primary 60D05; secondary 05C80, 60C05
\end{abstract}

\begin{keyword}
Random Graphs\sep Component Counts\sep Concentration Inequalities\sep Logarithmic Sobolev Inequalities\sep Poisson Point Process
\end{keyword}

\end{frontmatter}

%\linenumbers

\section{Introduction}
Random geometric graphs have been studied extensively for some decades now. In the simplest version of these graphs, the vertices are given by a random set of points in $\R^d$ and two vertices are connected by an edge if their distance is less than a fixed positive real number. This model was introduced by E. N. Gilbert in \cite{G_1961}, and since then many authors contributed to various directions of research on random geometric graphs. For a historical overview on the topic we refer the reader to the book \cite{P_2003} by M. D. Penrose. Recent contributions are e.g. \cite{DF_2011, LP_2013_1, LP_2013_2, RST_2013}.
\smallskip

It is a well established fact that numerous real world phenomena can be modeled by means of a random geometric graph, like for example the spread of a disease or a fire (see e.g. \cite{BW_2015, GG_2015}). Also, as communication networks such as wireless and sensor networks have become increasingly important in recent years, random geometric graphs have gained a considerable attention -- since they provide natural models for these objects (see e.g. \cite{CGY_2009, H_2013, MP_2010}).
\smallskip

Further applications arise from cluster analysis, where one aims to divide a given set of objects into groups (or clusters) such that objects within the same group are similar to each other (see e.g. \cite{B_1996_1, B_1996_2} for further reading). If the objects are represented by points in $\R^d$, one way to perform this task is to built a geometric graph over the points and to take the connected components of the graph as the clusters. At this, a connected component of a graph $G$ with vertex set $V$ is an induced connected subgraph $H$ of $G$ with vertex set $V'\subseteq V$ such that for any $x\in V'$ and $y\in V\setminus V'$ there is no edge between $x$ and $y$. For the purpose of statistical inference, a probabilistic theory for the connected components of the graph is needed.
\medskip

Throughout the present work, the vertices of the considered random geometric graphs are given by a Poisson point process on $\R^d$. The class of random variables that is investigated in this paper includes a variety of quantities that are typically of interest in several of the applications described above. For example, one can consider the number of connected components of the graph with at most $k$ (or alternatively with exactly $k$) vertices. Further random variables that are covered by our analysis are obtained by counting the number of components that are isomorphic to a fixed connected graph $H$. Early work on the latter quantities was done by R. Hafner in \cite{H_1972} and further related results are presented in \cite{P_2003}.
\medskip

The main contribution of the present paper is to establish new exponential upper bounds for the probabilities $\P(F\geq \E F + r)$ and $\P(F\leq \E F- r)$, where $\E F$ denotes the expectation of a component count $F$ and $r>0$ is a real number. Inequalities of this type are usually called \emph{concentration inequalities}. In order to derive our estimates, we use and enhance a method that was recently developed by S. Bachmann and G. Peccati in \cite{BP_2015}. The latter paper provides several refinements of a method for proving tail estimates for Poisson functionals (also known as the entropy-method), which is based on (modified) logarithmic Sobolev inequalities, and which was particularly studied in the seminal work by Wu \cite{W_2000}, extending previous findings by An\'e, Bobkov and Ledoux \cite{AL_2000, BL_1998}. Combining Wu's modified logarithmic Sobolev inequality with the famous Mecke formula for Poisson processes, the authors of \cite{BP_2015} were able to adapt concentration techniques for product space functionals, which were particularly developed by Boucheron, Lugosi and Massart \cite{BLM_2003}, and also by Maurer \cite{M_2006}, to the setting of Poisson processes. This approach adds a lot of flexibility to the entropy-method, and a remarkable feature of the obtained techniques is that they allow to deal with functionals build over Poisson processes with infinite intensity measure.

First applications for these techniques are worked out in \cite{BP_2015} and also in \cite{BR_2015}, where concentration bounds for certain Poisson U-statistics with positive kernels are established. A crucial property that was exploited in the latter investigations is that adding a point to the Poisson process cannot decrease the value of the considered functionals. In principle, this monotonicity is not needed for the method suggested in \cite{BP_2015} to be applied. However, due to somewhat more complicated objects that need to be controlled when dealing with non-monotonic functionals, the method has only been successfully used for monotonic quantities so far. \pagebreak Clearly, the component counts that are studied in the present paper are not monotonic. So, a particularly interesting aspect of the presented work is that it provides a class of functionals for which the approach from \cite{BP_2015} can be used although monotonicity is not satisfied.

As mentioned above, the used techniques do not restrict to finite intensity measure processes, and a remarkable feature of the presented estimates is indeed that they even apply in certain settings where the intensity measure of the underlying Poisson process is infinite, meaning that the resulting graph has almost surely infinitely many vertices. The following statement gathers several concentration estimates for component counts that are representative of the general bounds deduced in the present paper. To the best of the author's knowledge, there are no comparable concentration inequalities in the literature so far.

\begin{thm} \label{thm:1}
Let $\eta$ be a Poisson point process on $\R^d$ with locally finite and non-atomic intensity measure $\mu$. Let $\rho>0$ and consider the random geometric graph $G_\rho(\eta)$ with vertices $\eta$ and an edge between distinct $x,y\in\eta$ whenever $\lVert x-y\rVert \leq \rho$. Moreover, denote the ball centered at $x$ with radius $\rho$ by $B(x,\rho)$ and assume
\begin{align*}
\sigma_\rho^\mu = \sup_{x\in\R^d}\mu(B(x,\rho)) < \infty.
\end{align*}

Let $F$ be one of the following:
\begin{enumerate}[(i)]
\item the number of components of $G_\rho(\eta)$ with at most $k$ vertices;
\item the number of components of $G_\rho(\eta)$ with exactly $k$ vertices;
\item the number of components of $G_\rho(\eta)$ that are isomorphic to some fixed connected graph $H$ on $k$ vertices.
\end{enumerate}
Assume that $F$ is almost surely finite. Then $F$ is integrable and for any $r\geq 0$,
\begin{align*}
\P(F\geq \E F + r) &\leq \exp\left(-\frac{r^2}{a(2\E F + r)}\right),\\
\P(F\leq \E F - r) &\leq \exp\left(-\frac{r^2}{2 \max(a,4c_d / 3) \E F}\right),
\end{align*}
where $c_d>0$ is a constant that only depends on $d$ and
\begin{align*}
a = k\left(c_d^2 \sigma_\rho^\mu+ 1\right).
\end{align*}

\end{thm}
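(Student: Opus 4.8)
The plan is to handle the three functionals uniformly, using that in every case $F$ counts connected components enjoying a property that is local (it depends only on the induced subgraph carried by the component) and that forces the component to have at most $k$ vertices; call such components \emph{admissible}. The engine of the proof is the add-one cost $D_xF=F(\eta+\delta_x)-F(\eta)$, and I would begin by extracting its geometry. Adding a point $x$ changes $G_\rho(\eta)$ only on components possessing a vertex in $B(x,\rho)$; all of these merge with $x$ into a single new component, everything else being untouched. Since vertices in distinct components lie at distance $>\rho$, picking one vertex from each component that meets $B(x,\rho)$ produces points of $B(x,\rho)$ that are pairwise more than $\rho$ apart, and a packing estimate in $\R^d$ bounds their number by a dimensional constant $c_d$. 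Hence the merge annihilates at most $c_d$ admissible components and creates at most one, giving
\begin{align*}
-c_d\ \le\ D_xF\ \le\ 1,\qquad\text{so}\qquad |D_xF|\le c_d\quad(\text{we take }c_d\ge 1).
\end{align*}

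For the lower tail I would produce an almost-sure self-bounding estimate for the negative part. A point $x$ can destroy an admissible component $C$ only if $x\in\bigcup_{v\in C}B(v,\rho)$, a set of $\mu$-measure at most $|C|\,\sigma_\rho^\mu\le k\sigma_\rho^\mu$. Writing $N(x)$ for the number of admissible components met by $B(x,\rho)$ we have $(D_xF)_-\le N(x)\le c_d$, whence $(D_xF)_-^2\le c_dN(x)$, and summing the measure bound over the $F$ admissible components yields
\begin{align*}
\int(D_xF)_-^2\,\mu(dx)\ \le\ c_d\int N(x)\,\mu(dx)\ \le\ c_d\,k\,\sigma_\rho^\mu\,F\ \le\ a\,F
\end{align*}
almost surely. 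Feeding this bound, together with $|D_xF|\le c_d$, into the lower-deviation inequality of \cite{BP_2015} gives the stated Gaussian tail, the quantity $\max(a,4c_d/3)$ and the factor $2$ being exactly the constants produced by that inequality from the self-bounding coefficient $a$ and the increment bound $c_d$.

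The upper tail is where the real difficulty, and the non-monotonicity of $F$, enters. One checks that $(D_xF)_+\in\{0,1\}$ and that $(D_xF)_+=1$ forces the component of $x$ in $\eta+\delta_x$ to be itself admissible -- for (i) this means that $x$ is isolated, for (ii) and (iii) that $x$ completes an admissible component. These creation events are \emph{not} dominated pointwise by any multiple of $F(\eta)$, so the almost-sure control of $\int(D_xF)_+^2\mu(dx)$ demanded by the self-bounding results of \cite{BP_2015} is unavailable; this is precisely the step at which the method must be enhanced, and it is the main obstacle. The remedy I would use is the Mecke equation: since $(D_xF)_+^2\le\ind\{\text{the component of }x\text{ in }\eta+\delta_x\text{ is admissible}\}$,
\begin{align*}
\E\int(D_xF)_+^2\,\mu(dx)\ \le\ \int\P\big(\text{component of }x\text{ in }\eta+\delta_x\text{ is admissible}\big)\,\mu(dx)\ \le\ k\,\E F,
\end{align*}
the last inequality being Mecke applied to the at most $k$ vertices of each admissible component. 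The heart of the argument is to run this computation \emph{inside the exponential tilt}: one must bound the tilted integral $\E\big[e^{tF}\int(D_xF)_+^2\,\mu(dx)\big]$, and here moving the extra point through the Mecke formula changes the weight from $e^{tF}$ to $e^{tF(\eta+\delta_x)}$, a discrepancy absorbed using $|D_xF|\le c_d$. This closes the differential inequality for $\log\E e^{tF}$ and yields the Bernstein-type bound with variance proxy of order $a\,\E F$ and range term of order $a\,r$. The almost-sure finiteness of $F$ legitimises these manipulations, and the resulting exponential upper bound in turn forces $F\in L^1$. The genuinely delicate point throughout is this tilted positive-part estimate: replacing the (failing) almost-sure self-bounding by an in-expectation bound obtained from the Mecke formula is exactly the enhancement of \cite{BP_2015} that the non-monotonic setting requires.
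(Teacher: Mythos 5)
Your geometric analysis is sound and matches the paper's Lemma \ref{lem:observations}: the packing bound $c_d$, the two-sided estimate $-c_d\le D_xF\le 1$, and the observation that $D_xF(\eta)<0$ forces $x$ to lie in $\bigcup_{v\in C}B(v,\rho)$ for some admissible component $C$, which yields $\int (D_xF(\eta))_-^2\,d\mu(x)\le c_d^2 k\sigma_\rho^\mu F$ almost surely. The gap is in how you treat the positive part, and it stems from misidentifying which quantity the tools of \cite{BP_2015} ask you to control. The hypothesis \eqref{cond} of Theorems \ref{thm:tool} and \ref{thm:lowertail} is
\begin{align*}
\int_{\R^d}(D_xF(\eta))_-^2\,d\mu(x)+\sum_{x\in\eta}(D_xF(\eta-\delta_x))_+^2\le aF(\eta),
\end{align*}
so the positive part enters as a \emph{sum over the points of the process} of the remove-one cost, not as the integral $\int(D_xF(\eta))_+^2\,d\mu(x)$ against the intensity measure. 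You are right that the latter is not dominated by any multiple of $F$; but the former is, and trivially so: $D_xF(\eta-\delta_x)_+\le 1$, and it is nonzero only if $x$ belongs to an admissible component of $G_\rho(\eta)$, so the sum is at most $kF(\eta)$ (this is Lemma \ref{lem:observations}(iii) and the proof of Theorem \ref{thm:main}). With that, \eqref{cond} holds almost surely with $a=k(c_d^2\sigma_\rho^\mu+1)$ and the upper tail is a direct application of Theorem \ref{thm:tool}; no Mecke-inside-the-tilt argument is needed.

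Two consequences. First, your proposed ``enhancement'' for the upper tail attacks a non-problem and, as sketched, is not a proof: after moving the added point through the Mecke formula, the weight $e^{tF(\eta+\delta_x)}$ differs from $e^{tF(\eta)}$ by a factor you can only bound by $e^{tc_d}$, and this is not ``absorbed'' for free -- it propagates into the Herbst differential inequality and destroys the stated constants. Handling exactly this discrepancy is what the term $\sum_{x\in\eta}\phi(-uD_xF(\eta-\delta_x)_+)$ in Wu's modified logarithmic Sobolev inequality (Proposition \ref{EntIneq}) already does, and it is packaged into the cited Theorem \ref{thm:tool}. Second, your lower-tail argument feeds only the negative-part integral into the machinery, whereas Theorem \ref{thm:lowertail} requires the full condition \eqref{cond}, positive-part sum included; this is repaired by the same $\le kF(\eta)$ bound. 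The genuine novelty of the paper sits where you did not look for it: the upper tail is routine once \eqref{cond} is verified, while the lower tail requires extending \cite[Theorem 3.10]{BP_2015} to difference operators of arbitrary sign (Theorem \ref{thm:lowertail} together with Lemma \ref{lem:analytic}), which is the paper's actual enhancement of the method.
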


As an application of the established concentration inequalities, we prove strong laws of large numbers for suitably rescaled versions of the component counts. These results complement some statements from \cite[Chapter 3]{P_2003}, where comparable strong laws are proved for the $H$-component counts in the case where the vertices of the graph are i.i.d. points in $\R^d$. To the best of the author's knowledge, there are no strong laws in the literature so far for the Poisson case, and the established results even apply in certain settings where the underlying Poisson process has infinite intensity measure. Moreover, the strong laws from \cite{P_2003} do not cover at all the case where the expected degree of a typical vertex tends to infinity, commonly referred to as the \emph{dense regime}. \pagebreak In this regime, the limit behavior of the component counts typically depends heavily on the actual form of the intensity measure of the Poisson process. However, for a certain class of intensity measures, our strong laws hold even in the dense regime.
\smallskip

A further rapidly developing direction of research that is closely related to the study of random geometric graphs is the field of random geometric simplicial complexes (see e.g. \cite{DFR_2014, K_2011,  KM_2013, YSA_2014}). In order to investigate these random topological objects, it frequently turns out that results and properties of the underlying random geometric graphs can be very useful. Our findings and methods might therefore as well be of interest for future research on random simplicial complexes.
\smallskip

The paper is organized as follows. In Section 2, the framework that is considered in the present work is described . This includes a detailed description of the random geometric graph model as well as the introduction of the class of component counts that is at the core of our investigation. In Section 3, the main results of the present work are presented. In particular, the concentration inequalities for component counts, but also results regarding the limit behavior of the expectation, some integrability criteria for component counts as well as strong laws of large numbers are stated in this section. The proofs of our results are detailed in Section 4.

\section{Framework} \label{s:framework}

Let $\Nb$ denote the space of locally finite point configurations in $\R^d$. The elements of $\Nb$ can be regarded either as locally finite subsets of $\R^d$ or as locally finite simple counting measures on $\R^d$. For $\xi\in \Nb$ we will therefore use set notations like $\xi\cap A$ but also measure notations like $\xi(A)$. The space $\Nb$ comes equipped with the usual $\sigma$-algebra $\Nc$ that is generated by the maps $\Nb \to \R\cup\{\infty\}, \xi \mapsto \xi(A)$ where $A$ ranges over all Borel subsets of $\R^d$.
\medskip

Throughout, we will consider a non-trivial Poisson point process $\eta$ on $\R^d$ with locally finite and non-atomic intensity measure $\mu$. In particular, one has that $\eta$ is a random element in $\Nb$ that satisfies: \textbf{(i)} For any disjoint Borel sets $A_1,\ldots,A_n \subseteq\R^d$ the random variables $\eta(A_1),\ldots,\eta(A_n)$ are independent; \textbf{(ii)} For any Borel set $A\subseteq\R^d$ the random variable $\eta(A)$ is Poisson distributed with parameter $\mu(A)=\E\eta(A)$. Here we follow the convention that a Poisson random variable with infinite mean takes almost surely the value $\infty$.
\medskip

The geometric graph model that will be considered in the present work was particularly investigated in \cite{BR_2015, H_1972, LP_2013_1, LP_2013_2} and slightly generalizes the classical model of random geometric graphs. The latter model has been investigated by many authors and is extensively described in Penrose's book \cite{P_2003}. Let $S\subset\R^d$ be a Borel set such that $S = -S$. For any $\xi\in\Nb$ we define the \emph{geometric graph} $G_S(\xi)$ to be the graph with vertex set $\xi$ and an edge between two distinct vertices $x,y\in\xi$ whenever $x-y \in S$. Now, the \emph{random geometric graph} associated with $S$ and $\eta$ is given by $G_S(\eta)$. Let $B(x,\rho)$ denote the closed Euclidean ball centered at $x\in\R^d$ with radius $\rho>0$. It will be assumed throughout that $B(0, \rho)\subseteq S \subseteq B(0,\theta \rho)$ for some $\rho>0$ and $\theta \geq 1$. \pagebreak If we take $\theta=1$, i.e. $S=B(0,\rho)$, we obtain the classical random geometric graph with respect to the Euclidean norm that is also often referred to as \emph{random disk graph}. Of course, also any other norm ball can be chosen for $S$.
\medskip

We continue by introducing the class of random variables associated with $G_S(\eta)$ which are studied in the present work. For this purpose we first recall some basic concepts from graph theory. Consider some graph $G=(V, E)$ with vertex set $V$ and edge set $E\subseteq \{\{x,y\}\subseteq V:x\neq y\}$. An \emph{induced subgraph} of $G$ is a graph $H=(V', E')$ that satisfies $V'\subseteq V$ and moreover $E' = \{\{x,y\}\in E:x,y\in V'\}$. A \emph{(connected) component} of $G$ is an induced subgraph $H=(V', E')$ of $G$ such that $H$ is connected and for any $x\in V'$ and $y\in V\setminus V'$ one has $\{x,y\}\notin E$.

Now, for any $\xi\in\Nb$ we denote by $\Cf_S(\xi)$ the set of those subsets $\xb\subseteq \xi$ such that $G_S(\xb)$ is a connected component of $G_S(\xi)$. Moreover, for $k\in\N$ let
\begin{align*}
\Cf_S^k(\xi) = \{\xb\in \Cf_S(\xi) : |\xb| = k\} \ \text{ and } \ \Cf_S^{\leq k}(\xi) = \{\xb\in \Cf_S(\xi) : |\xb| \leq k\}.
\end{align*}
For any set $A\in\Nc^{\leq k} := \{B\in\Nc:|\xb|\leq k \text{ for all } \xb\in B\}$ we define the functional
\begin{align} \label{def:compcount}
F^A_S:\Nb\to \R\cup\{\infty\}, \ \ F^A_S(\xi) = \sum_{\xb\in \Cf_S^{\leq k}(\xi)} \ind\{\xb \in A\}.
\end{align}
The objects of study in the present work are the random variables $F_S^A(\eta)$ and if there is no risk of ambiguity, we will identify $F_S^A(\eta)$ with the corresponding functional $F_S^A$. Note that we assume throughout that the sets $A$ and $S$ as well as the intensity measure $\mu$ are chosen in a way such that $F_S^A(\eta)$ is non-trivial, i.e. $\E F_S^A(\eta) > 0$. The quantity $F_S^A$ counts all components in the graph $G_S(\eta)$ consisting of at most $k$ vertices that satisfy an arbitrary additional condition, given by the set $A$. This class of random variables includes many objects that naturally arise when studying random geometric graphs. For example, the number of connected components in $G_S(\eta)$ with at most $k$ vertices is obtained by taking $A=\{\xb\in\Nb: |\xb|\leq k\}$, while the choice $A=\{\xb\in\Nb: |\xb|=k\}$ yields the number of components with exactly $k$ vertices. Moreover, for any connected graph $H$ on $k$ vertices, one can take $A=\{\xb\in\Nb : G_S(\xb)\cong H\}$, where $\cong$ denotes an isomorphism between graphs. The resulting random variable $F^A_S =: J_H(\eta)$ is the so-called \emph{$H$-component count} associated with $G_S(\eta)$ which counts the connected components that are isomorphic to the graph $H$. A variety of results regarding this class of random variables can be found in \cite{H_1972} and also in \cite{P_2003}.
\medskip

We conclude this section with a picture that illustrates how the considered graphs and their connected components might look like in the plane. The intensity measure of the simulated Poisson process is given by the density $m(x) = 100(\lVert x\rVert + 1)^{-2}$ and the set $S$ is chosen to be the Euclidean ball with radius $\rho = 0.95$. The picture shows a realization of the resulting disk graph in the window $[-30,30]^2$ around the origin. Moreover, connected components with $3$ vertices are colored black.
\medskip

\begin{figure}[H]
\centering
\includegraphics[scale=0.73]{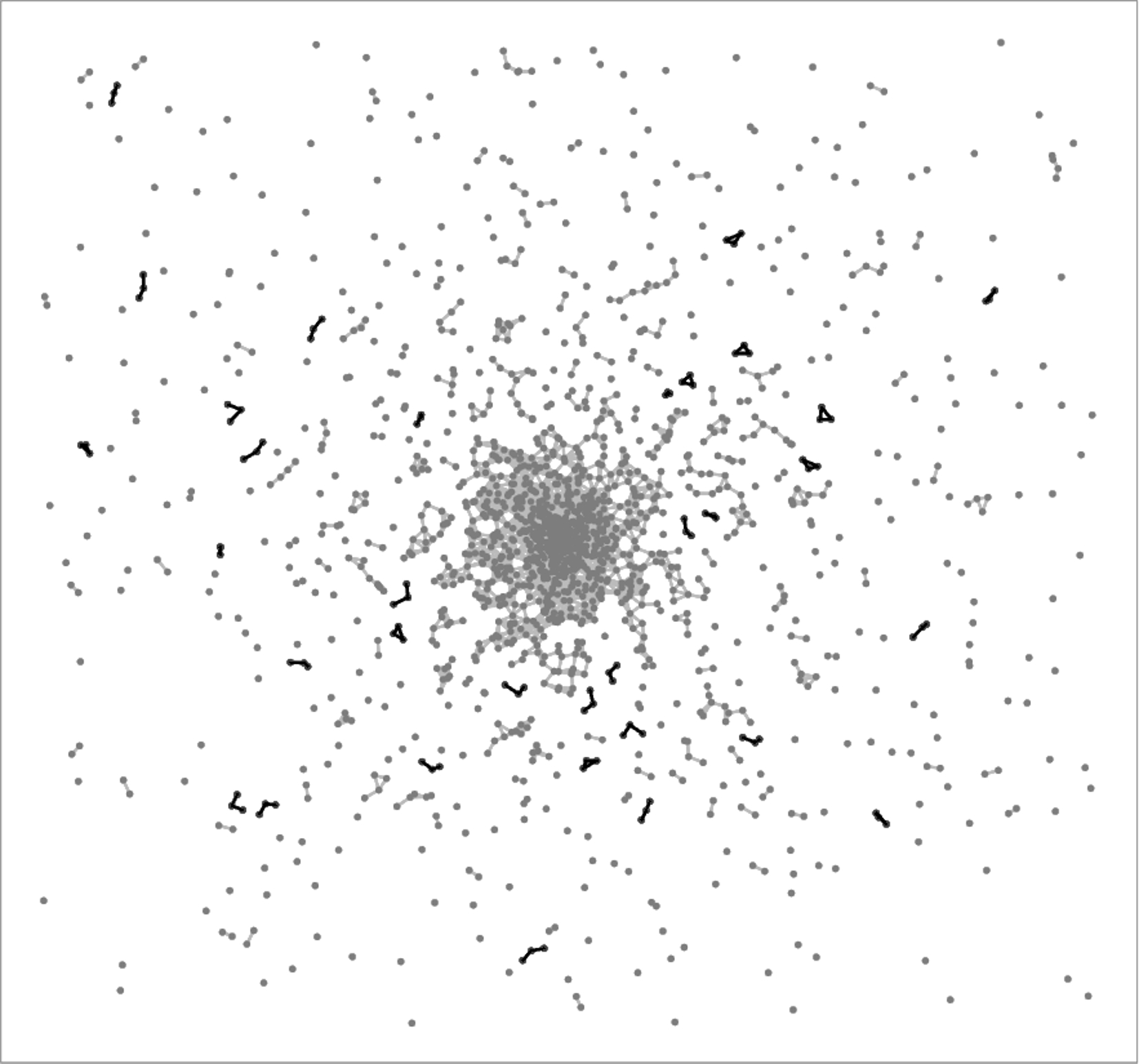}
\caption{Components with $3$ vertices of a random disk graph}
\end{figure}

\section{Main results}
\subsection{Concentration inequalities} \label{sec:CI}
The concentration inequalities for the random variables $F_S^A$ are obtained using methods for proving tail bounds for Poisson functionals that were recently developed in \cite{BP_2015}. We start our presentation of the main inequalities by citing from that article what is relevant for the upcoming discussion. To do so, we need to introduce the \emph{difference (or add-one cost) operator} $D$ which is defined for any measurable functional $F:\Nb\to\R$ by
\begin{align*}
D_xF(\xi) = F(\xi+\delta_x) - F(\xi), \ \ \text{for } (x,\xi)\in \R^d \times \Nb.
\end{align*}
Note also that for $z\in\R$ we will write $z_-^2 = \ind\{z<0\}z^2$ and $z_+^2 = \ind\{z>0\}z^2$. The following theorem is taken from \cite[Corollary 3.6 and Theorem 3.7]{BP_2015}.

\begin{thm} \label{thm:tool}
Let $F:\Nb\to\R$ be a measurable non-negative functional and consider the random variable $F(\eta)=F$. Assume that almost surely
\begin{align} \label{cond}
\int_{_{\R^d}} (D_xF(\eta))_-^2 d\mu(x) + \sum_{x\in\eta} (D_xF(\eta-\delta_x))_+^2 \leq a F(\eta)
\end{align}
for some constant $a>0$. Then $F$ is integrable and for any $r\geq 0$,
\begin{align*}
\P(F\geq \E F + r)&\leq \exp\left(-\frac{r^2}{a(2\E F + r)}\right).
\end{align*}
\end{thm}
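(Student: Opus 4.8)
The plan is to obtain the bound from Wu's modified logarithmic Sobolev inequality, combined with the Mecke equation, the self-bounding hypothesis \eqref{cond}, and the Herbst argument. Write $L(\lambda)=\E[e^{\lambda F}]$ and $G(\lambda)=\log L(\lambda)$, and fix $\lambda\in(0,2/a)$. First I would apply Wu's inequality to the functional $e^{\lambda F}$, which gives
\[
\Ent(e^{\lambda F}) \le \E\int_{\R^d} e^{\lambda F(\eta)}\,\vartheta(\lambda D_x F(\eta))\,d\mu(x),
\]
where $\vartheta\ge 0$ is the explicit function appearing on the right-hand side of Wu's inequality; the point is that it obeys the elementary bounds $\vartheta(u)\le\tfrac12 u^2$ for $u\le 0$ and $\vartheta(u)\le\tfrac12 u^2 e^u$ for $u\ge 0$, both of which follow from one-variable calculus.

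I would then split the integrand according to the sign of $D_x F(\eta)$. On $\{D_x F(\eta)\le 0\}$ the first bound yields the contribution $\tfrac{\lambda^2}{2}\,\E\int e^{\lambda F(\eta)}(D_x F(\eta))_-^2\,d\mu(x)$. On $\{D_x F(\eta)\ge 0\}$ the second bound produces the factor $e^{\lambda F(\eta)}e^{\lambda D_x F(\eta)}=e^{\lambda F(\eta+\delta_x)}$, and here the Mecke equation is the decisive device: rewriting the $\mu$-integral of a function evaluated at $\eta+\delta_x$ as a sum over the points of $\eta$ turns $(D_x F(\eta))_+^2$ into $(D_x F(\eta-\delta_x))_+^2$ and replaces $e^{\lambda F(\eta+\delta_x)}$ by $e^{\lambda F(\eta)}$. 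Adding the two contributions gives
\[
\Ent(e^{\lambda F}) \le \frac{\lambda^2}{2}\,\E\!\left[e^{\lambda F}\!\left(\int_{\R^d}(D_x F(\eta))_-^2\,d\mu(x)+\sum_{x\in\eta}(D_x F(\eta-\delta_x))_+^2\right)\right],
\]
at which point hypothesis \eqref{cond} applies verbatim and bounds the parenthesis by $aF(\eta)$, so that $\Ent(e^{\lambda F})\le\tfrac{a\lambda^2}{2}\,\E[F e^{\lambda F}]=\tfrac{a\lambda^2}{2}L'(\lambda)$.

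The remaining step is the Herbst argument. Inserting the identity $\Ent(e^{\lambda F})=\lambda L'(\lambda)-L(\lambda)\log L(\lambda)$ and dividing by $L(\lambda)>0$ turns the last estimate into the differential inequality $\lambda G'(\lambda)-G(\lambda)\le\tfrac{a\lambda^2}{2}G'(\lambda)$. Setting $K(\lambda)=G(\lambda)/\lambda$, this rearranges to $K'(\lambda)\bigl(1-\tfrac{a\lambda}{2}\bigr)\le\tfrac{a}{2}K(\lambda)$; since $F\ge 0$ forces $G(\lambda)\ge\lambda\E F>0$ and $K(0^+)=\E F$, integrating $K'/K\le (a/2)/(1-a\lambda/2)$ over $(0,\lambda)$ yields $G(\lambda)\le\lambda\E F/(1-a\lambda/2)$, hence $\log\E[e^{\lambda(F-\E F)}]\le a\E F\lambda^2/\bigl(2(1-a\lambda/2)\bigr)$. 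The exponential Markov inequality together with the standard optimization of this Bernstein-type exponent over $\lambda\in(0,2/a)$ then delivers $\P(F\ge\E F+r)\le\exp\bigl(-r^2/(a(2\E F+r))\bigr)$.

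The main obstacle I anticipate is not this formal chain but the a priori integrability that legitimizes it: the identity for $\Ent(e^{\lambda F})$ and the differential inequality only make sense once one knows that $L(\lambda)=\E[e^{\lambda F}]<\infty$ on a right-neighbourhood of the origin, and hence that $\E F<\infty$ -- which is itself part of the assertion. This is genuinely delicate here because $\mu$ may be infinite, so that $\eta$ carries infinitely many points. I would address it by a truncation scheme, running the argument above for the bounded functionals $F\wedge n$, for which every quantity is finite, deriving exponential moment bounds that are uniform in $n$, and then recovering both the integrability of $F$ and the final tail estimate by monotone convergence as $n\to\infty$. Verifying that the self-bounding structure \eqref{cond} and Wu's inequality survive this truncation, and checking the measurability and finiteness conditions needed to invoke the Mecke equation in the infinite-intensity regime, is where the real work lies.
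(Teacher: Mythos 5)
Your main chain is precisely the argument behind this theorem. Note first that the paper does not prove Theorem \ref{thm:tool} itself: it is imported verbatim from \cite{BP_2015} (Corollary 3.6 and Theorem 3.7 there), and the mechanism of that source --- Wu's modified logarithmic Sobolev inequality combined with the Mecke formula (which is exactly Proposition \ref{EntIneq} of this paper), the elementary bounds $\psi(u)\le u^2/2$ for $u\le 0$ and $\psi(u)\le \tfrac12 u^2e^u$ for $u\ge 0$, hypothesis \eqref{cond}, and a Herbst-type integration --- is the one you describe; it is also the template the paper follows explicitly in its proof of the lower-tail companion Theorem \ref{thm:lowertail}. Your bookkeeping is correct: the Mecke step converts $\E\int (D_xF(\eta))_+^2e^{\lambda F(\eta+\delta_x)}d\mu(x)$ into $\E\sum_{x\in\eta}(D_xF(\eta-\delta_x))_+^2e^{\lambda F(\eta)}$, the differential inequality $K'(1-a\lambda/2)\le (a/2)K$ integrates to $\log\E e^{\lambda(F-\E F)}\le a\E F\lambda^2/(2(1-a\lambda/2))$, and the Bernstein optimization with $v=a\E F$, $c=a/2$ reproduces the stated exponent exactly.

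The one genuine gap is the step you yourself defer, and as described it would fail: condition \eqref{cond} is not stable under the truncation $F\mapsto F\wedge n$. Since $z\mapsto z\wedge n$ is a contraction, the left-hand side of \eqref{cond} evaluated for $F\wedge n$ is dominated by the left-hand side for $F$, hence by $aF(\eta)$ --- but on the event $\{F(\eta)>n\}$ this is not bounded by $a(F\wedge n)(\eta)=an$, so the truncated functional need not be self-bounding with the same constant and the Herbst argument does not close for $F\wedge n$. One can still apply Proposition \ref{EntIneq} to $e^{\lambda(F\wedge n)}$ and, using that the difference operator of $F\wedge n$ is pointwise dominated by that of $F$, arrive at $\Ent(e^{\lambda(F\wedge n)})\le \tfrac{a\lambda^2}{2}\E[Fe^{\lambda(F\wedge n)}]$; but the right-hand side now involves $F$ rather than $F\wedge n$, and relating $\E[Fe^{\lambda(F\wedge n)}]$ to $\E[(F\wedge n)e^{\lambda(F\wedge n)}]$ requires exactly the integrability information you are trying to establish. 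This is why the integrability assertion is a separate statement in \cite{BP_2015} (their Corollary 3.6), proved before the exponential bound rather than extracted from the same monotone-convergence pass. So: the tail-bound machinery is right and matches the source, but the ``$F$ is integrable'' half of the conclusion needs its own argument, not merely a truncation of the Herbst chain.
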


The lower tail inequalities for the functionals $F_S^A$ will follow from the next result which is a generalization of \cite[Theorem 3.10]{BP_2015} to the case where the difference operator of the considered functional is allowed to have arbitrary sign. Note that this result holds even in the very general framework of the paper \cite{BP_2015} where Poisson point processes on arbitrary $\sigma$-finite measure spaces are considered.

\begin{thm} \label{thm:lowertail}
Let $F:\Nb\to\R$ be a measurable non-negative functional such that the random variable $F(\eta)=F$ is integrable. Assume that $F$ satisfies condition \eqref{cond} for some $a> 0$ and that moreover
\begin{align*}
|D_z F(\xi)| \leq 1 \ \ \text{for any} \ \ (z,\xi)\in\R^d\times\Nb.
\end{align*}
Then for all $r\geq 0$,
\begin{align*}
\P(F\leq \E F - r) \leq \exp\left(-\frac{r^2}{2 \max(a,4/3) \E F}\right).
\end{align*} 
\end{thm}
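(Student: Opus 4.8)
The plan is to follow the entropy method used in the proof of \cite[Theorem 3.10]{BP_2015}, the genuinely new ingredient being the treatment of the arbitrary sign of the difference operator via the Mecke formula. Throughout I work with the Laplace transform $L(s)=\E[e^{-sF}]$ for $s\ge 0$. Since $F\ge 0$ is integrable and $e^{-sF}\le 1$, the function $L$ is finite, smooth on $(0,\infty)$ with $L'(s)=-\E[Fe^{-sF}]$, and $\Ent(e^{-sF})=sL'(s)-L(s)\log L(s)$ is finite; these regularity facts are what condition \eqref{cond} and the integrability assumption ultimately guarantee. The endpoint is the Chernoff bound $\P(F\le \E F-r)\le e^{s(\E F-r)}L(s)$, so it suffices to establish the sub-Gaussian estimate $\log L(s)\le -s\,\E F+\tfrac12\max(a,4/3)\,\E F\,s^2$ and then optimise at $s=r/(\max(a,4/3)\,\E F)$.

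First I would apply Wu's modified logarithmic Sobolev inequality for Poisson functionals \cite{W_2000} to $g=-sF$. Writing $\psi(u)=ue^{u}-e^{u}+1=\int_0^u te^t\,dt\ge 0$, a direct computation with the Bregman form of the inequality (for $\phi(t)=t\log t$) gives
\begin{align*}
\Ent(e^{-sF})\le \E\int_{\R^d}e^{-sF(\eta)}\,\psi\bigl(-s\,D_xF(\eta)\bigr)\,d\mu(x).
\end{align*}
I then split the integral by the sign of $D_xF(\eta)$. On $\{D_xF(\eta)\ge 0\}$ the argument of $\psi$ is $\le 0$ and I use $\psi(u)\le\tfrac12 u^2$; on $\{D_xF(\eta)<0\}$ the argument is positive but, by $|D_xF|\le 1$, bounded by $s$, so a monotonicity estimate for $\psi(u)/u^2$ on $[0,s]$ applies.

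The crucial step is the sign reconciliation. The contribution of $\{D_xF(\eta)<0\}$ already matches the first term $\int (D_xF(\eta))_-^2\,d\mu$ of \eqref{cond}. For the contribution of $\{D_xF(\eta)\ge 0\}$ I would invoke the Mecke formula to rewrite $\E\int_{\R^d}(\cdots)\,d\mu(x)=\E\sum_{x\in\eta}(\cdots)\big|_{\eta\mapsto\eta-\delta_x}$, which converts the ``add a point'' integral into the ``remove a point'' sum $\sum_{x\in\eta}(D_xF(\eta-\delta_x))_+^2$ that is the second term of \eqref{cond}; the Lipschitz bound $|D_xF(\eta-\delta_x)|\le 1$ controls the exponential weight produced by the shift $F(\eta-\delta_x)=F(\eta)-D_xF(\eta-\delta_x)$. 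After these estimates both contributions carry the common factor $e^{-sF(\eta)}$, so \eqref{cond} yields a bound of the shape $\Ent(e^{-sF})\le c(s)\,\E[Fe^{-sF}]=-c(s)L'(s)$ with an explicit $c(s)$ that behaves like $\tfrac12 a s^2$ as $s\to 0$.

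Dividing by $L(s)$ converts this into the differential inequality $s\Lambda'(s)-\Lambda(s)\le c(s)(-\Lambda'(s))$ for $\Lambda=\log L$, which I would integrate by the Herbst argument applied to $\Lambda(s)/s$ (whose limit at $0$ is $\Lambda'(0)=-\E F$), using that the tilted mean $-\Lambda'(s)=\E[Fe^{-sF}]/L(s)$ is nonincreasing in $s$, its derivative being $-\mathrm{Var}$ under the exponentially tilted law, and in particular $-\Lambda'(s)\le \E F$. I expect the main obstacle to lie precisely here: the crude bound $-\Lambda'(s)\le\E F$ is too lossy to reach the stated constant, so the differential inequality must be solved sharply, retaining the $c(s)$ term in the denominator, and the resulting integral estimated on the relevant range $s\le 1/\max(a,4/3)$. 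It is this careful analysis of $\psi$ under the constraint $|D_xF|\le 1$ that produces the floor $4/3$ in $\max(a,4/3)$, reflecting that when $a$ is small it is the bounded-difference constraint, rather than the self-bounding constant $a$, that governs the Gaussian decay.
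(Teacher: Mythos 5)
Your skeleton is the paper's: Wu's modified logarithmic Sobolev inequality combined with the Mecke formula to separate the two signs of the difference operator (this is precisely Proposition \ref{EntIneq}, which the paper simply quotes from \cite[Proposition 3.1]{BP_2015}), followed by a Herbst argument for $h(u)=u^{-1}\log\E (e^{uF})$ in which the self-referential inequality is solved exactly, i.e.\ with the $c(s)$-term kept in the denominator, rather than via the lossy bound $\E(Fe^{uF})/\E(e^{uF})\le \E F$ --- you correctly identify that the crude bound would not suffice. However, as written your sketch does not reach the stated constant. On $\{D_xF(\eta)\ge 0\}$ you first bound $\psi(-sD_xF(\eta))\le \tfrac12 s^2 D_xF(\eta)^2$ and only then apply Mecke, controlling the tilting weight $e^{sD_xF(\eta-\delta_x)}$ by $e^s$; this produces the factor $\tfrac12 s^2 e^s$ on the removal term. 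The paper instead keeps the Mecke step exact: the identity $e^{w}\psi(-w)=e^{w}-w-1=\phi(w)$ shows that the weight and $\psi$ combine without loss into $\phi(sD_xF(\eta-\delta_x))$, which is then bounded by $\tfrac{\psi(s)}{s^2}\,(s D_xF(\eta-\delta_x)_+)^2$ using monotonicity of $\phi(z)/z^2$ and $\phi\le\psi$ on $[0,\infty)$. The difference matters: with your factor the quantity to be bounded at the end is $\frac{a e^z/2}{1+az e^z/2}$, whose supremum for $a=4/3$ is $1/(1+\log(3/2))\approx 0.712>2/3$, attained at $z=\log(3/2)$, which lies inside the range $z\le 1/\max(a,4/3)$ relevant for the Chernoff optimisation; so $\max(a,4/3)$ cannot be recovered on this route.

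The second, and more substantive, gap is that the final analytic estimate is only anticipated, not proved. After the sharp integration one must show
\begin{align*}
\frac{a\psi(z)/z^{2}}{1+a\psi(z)/z}\;\le\;\frac{\max(a,4/3)}{2}\qquad\text{for all }z>0,
\end{align*}
which is exactly Lemma \ref{lem:analytic} of the paper and is where the floor $4/3$ actually comes from. Its proof reduces to the inequality $\psi(z)\left(1-\tfrac23 z\right)\le\tfrac12 z^{2}$, which the paper establishes by a term-by-term power-series comparison; nothing in your outline supplies this, and it is the one step that cannot be waved through by monotonicity or soft arguments. So: right road map, but the two quantitative steps that produce the constant $\max(a,4/3)$ --- the exact $\phi$-identity after Mecke and the power-series lemma --- are missing.
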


\begin{rem}
Note that in the above results, the functional $F$ is only allowed to take real values, while the functionals $F_S^A$ may as well take the value $\infty$. However, if the random variable $F_S^A(\eta)$ is almost surely finite, one can consider the functional $\mathfrak{f}_S^A:\Nb\to\R$ defined by $\mathfrak{f}_S^A(\xi) = F_S^A(\xi)$ if $F_S^A(\xi)<\infty$ and $\mathfrak{f}_S^A(\xi) = 0$ else. Then almost surely $F_S^A(\eta) = \mathfrak{f}_S^A(\eta)$ and via this modification the above results also apply to the functionals $F_S^A$.
\end{rem}

It will turn out that the difference operator of the functionals $F_S^A$ is bounded and that under quite general assumptions, these functionals satisfy the condition \eqref{cond} for a suitable constant $a>0$. We continue by introducing further quantities, depending on the parameters of the graph model, that will contribute to the constant $a$.

First, denote by $c_S$ the largest integer $k$ with the property that there exist $k$ points $x_1,\ldots,x_k \in S\subset \R^d$ such that $x_i - x_j \notin S$ whenever $i\neq j$. Then the assumption $B(0, \rho)\subseteq S \subseteq B(0,\theta \rho)$ ensures that $c_S < \infty$. Indeed, since $S$ is bounded, the maximal number of pairwise disjoint balls with radius $\rho/2$ and center in $S$ is finite. One also has that $\lVert x-y\rVert \leq \rho$ implies $x-y\in S$ for any $x,y\in\R^d$, where $\lVert \cdot\rVert$ denotes the Euclidean norm. Hence, we conclude that $c_S<\infty$. For the classical model of random geometric graphs where $S=B(0,\rho)$, the number $c_S$ coincides with the maximal number of points that can be placed in the unit ball in $\R^d$ such that any two of the points have distance larger than $1$. Then clearly $c_S$ depends only on the dimension $d$ of the surrounding space $\R^d$ and in the plane $\R^2$ one has for example $c_S=5$. 

We define a further constant $\sigma_S^\mu$ by
\begin{align*}
\sigma_S^\mu = \sup_{x\in\R^d}\mu(S+x).
\end{align*}
Of course, this quantity is not necessarily finite in such a general framework. However, the assumption $\sigma_S^\mu < \infty$, that will be in order throughout, still allows for a wide class of intensity measures $\mu$ that includes all finite intensity measures, but also even a homogeneous Poisson process verifies this condition. One crucial ancillary result for establishing concentration estimates for $F_S^A$ is the following.

\begin{thm}\label{thm:main}
Let $k \in\N$ and $A\in\Nc^{\leq k}$. Consider the random variable $F_S^A$ and assume that the intensity measure $\mu$ of the Poisson process $\eta$ ensures that almost surely $F_S^A<\infty$. Then $|D F_S^A| \leq c_S$ and almost surely
\begin{align*}
\int_{_{\R^d}} (D_xF_S^A(\eta))_-^2 d\mu(x) + \sum_{x\in\eta} (D_xF_S^A(\eta-\delta_x))_+^2 \leq a F_S^A,
\end{align*}
where
\begin{align*}
a = k\left(c_S^2 \sigma_S^\mu+ 1\right).
\end{align*}
\end{thm}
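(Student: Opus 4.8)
The plan is to track exactly how the value of $F_S^A$ changes when a single point is inserted, and then to bound the two summands in condition \eqref{cond} separately. First I would describe the add-one cost geometrically. Fix $\xi\in\Nb$ and $x\in\R^d$, and let $C_1,\ldots,C_m$ denote those connected components of $G_S(\xi)$ that contain an $S$-neighbour of $x$, i.e. that meet $\{y:y-x\in S\}$. Inserting $x$ merges precisely these into one new component $\xb^\ast=\{x\}\cup C_1\cup\cdots\cup C_m$ and leaves every other component untouched. Writing $\chi(\xb)=\ind\{|\xb|\le k,\ \xb\in A\}$ for the indicator that a component $\xb$ is counted by $F_S^A$, this yields the exact identity
\[
D_x F_S^A(\xi)=\chi(\xb^\ast)-\sum_{i=1}^m \chi(C_i),
\]
so the add-one cost is an integer whose gain part is at most $1$ and whose loss part is at most $m$.

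The geometric heart of the argument, which I expect to be the main (and only genuinely nontrivial) step, is the packing bound $m\le c_S$. Choosing for each $i$ a point $y_i\in C_i$ with $y_i-x\in S$ and setting $z_i=y_i-x\in S$, the fact that the $C_i$ are distinct components forces $y_i-y_j\notin S$, i.e. $z_i-z_j\notin S$ for $i\ne j$; by the very definition of $c_S$ this gives $m\le c_S$. Since the gain term is at most $1\le c_S$ and the loss term is at most $m\le c_S$, the first claim $|D F_S^A|\le c_S$ follows immediately from the identity above.

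For the second summand of \eqref{cond}, I would use that $D_x F_S^A(\eta-\delta_x)=F_S^A(\eta)-F_S^A(\eta-\delta_x)$ is an integer bounded above by its gain part $\le 1$, so it is strictly positive only when it equals $1$, which forces $\chi(\xb^\ast)=1$, i.e. the component of $x$ in $G_S(\eta)$ to be counted. Hence $(D_x F_S^A(\eta-\delta_x))_+^2\le \ind\{x\text{ lies in a counted component}\}$, and summing over $x\in\eta$ counts each counted component $\xb$ exactly $|\xb|\le k$ times, giving
\[
\sum_{x\in\eta}(D_x F_S^A(\eta-\delta_x))_+^2\le \sum_{\xb\text{ counted}}|\xb|\le k\,F_S^A .
\]

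For the first summand I would invoke $|D F_S^A|\le c_S$ to get $(D_x F_S^A(\eta))_-^2\le c_S^2\,\ind\{D_x F_S^A(\eta)<0\}$. A strict decrease forces at least one counted component $\xb$ to be absorbed into $\xb^\ast$, so (using $S=-S$) one has $x\in\bigcup_{y\in\xb}(S+y)$ for some counted $\xb$. Dominating the indicator by the sum of these events over all counted components, integrating against $\mu$, and using $\mu\big(\bigcup_{y\in\xb}(S+y)\big)\le|\xb|\,\sigma_S^\mu\le k\,\sigma_S^\mu$ for each counted $\xb$, I obtain $\int_{\R^d}(D_x F_S^A(\eta))_-^2\,d\mu(x)\le c_S^2 k\,\sigma_S^\mu\,F_S^A$. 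Adding the two estimates gives $a F_S^A$ with $a=k(c_S^2\sigma_S^\mu+1)$, as required; the remaining work is routine bookkeeping. A minor technical point to dispatch at the outset is the almost-sure finiteness: since inserting or deleting one point alters $F_S^A$ only through the local merging described above, finiteness of $F_S^A(\eta)$ propagates to $\eta\pm\delta_x$, so all differences appearing above are well defined almost surely.
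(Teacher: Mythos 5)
Your proposal is correct and follows essentially the same route as the paper: the packing argument showing that at most $c_S$ components can be merged by an inserted point, the observation that the positive part of the add-one cost is at most $1$ and nonzero only when $x$ lies in a counted component, and the localization of the negative part to $\bigcup_{y\in\xb}(S+y)$ over counted components $\xb$, combined exactly as you do to get $a=k(c_S^2\sigma_S^\mu+1)$. The only cosmetic difference is that you organize the bookkeeping through the explicit merging identity $D_xF_S^A(\xi)=\chi(\xb^\ast)-\sum_i\chi(C_i)$, whereas the paper states the three bounds as a separate lemma.
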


The above theorem together with Theorem \ref{thm:tool} and Theorem \ref{thm:lowertail} now immediately yields the following concentration bounds. Note that, in order to obtain the inequality for the lower tail, Theorem \ref{thm:lowertail} is applied to the functional $\tfrac {1}{c_S} F_S^A$.

\begin{cor}\label{cor:conc}
Let $k\in\N$ and $A\in\Nc^{\leq k}$. Consider the random variable $F_S^A$ and assume that the intensity measure $\mu$ of the Poisson process $\eta$ ensures that $\sigma_S^\mu<\infty$ as well as almost surely $F_S^A<\infty$. Then $F_S^A$ is integrable and for any $r\geq 0$,
\begin{align*}
\P(F_S^A\geq \E F_S^A + r) &\leq \exp\left(-\frac{r^2}{a(2\E F_S^A + r)}\right),\\
\P(F_S^A\leq \E F_S^A - r) &\leq \exp\left(-\frac{r^2}{2 \max(a,4c_S / 3) \E F_S^A}\right),
\end{align*}
where
\begin{align*}
a = k\left(c_S^2 \sigma_S^\mu+ 1\right).
\end{align*}
\end{cor}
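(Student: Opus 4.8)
The plan is to read off the corollary from the three preceding results: Theorem~\ref{thm:main} supplies the structural estimates for $F_S^A$, and Theorem~\ref{thm:tool} together with Theorem~\ref{thm:lowertail} convert them into the two tail bounds. Since $F_S^A$ is a count it is non-negative, and by the remark following Theorem~\ref{thm:lowertail} we may, on the almost sure event $\{F_S^A<\infty\}$, replace $F_S^A$ by the real-valued functional $\mathfrak{f}_S^A$ without changing any probability or expectation; I would work with this modification throughout and suppress it from the notation, so that the abstract theorems, which are stated for real-valued functionals, apply.

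For the upper tail the argument is automatic. Theorem~\ref{thm:main} shows that $F_S^A$ satisfies condition~\eqref{cond} with $a=k(c_S^2\sigma_S^\mu+1)$, so Theorem~\ref{thm:tool} applies verbatim and yields both the integrability of $F_S^A$ and the bound
\begin{align*}
\P(F_S^A\geq\E F_S^A+r)\leq\exp\left(-\frac{r^2}{a(2\E F_S^A+r)}\right),
\end{align*}
which is the first displayed inequality.

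The lower tail is the only step that requires care, because Theorem~\ref{thm:lowertail} demands $|D_zF|\leq 1$, whereas Theorem~\ref{thm:main} only gives $|DF_S^A|\leq c_S$. The remedy, as anticipated in the discussion above, is to rescale: set $G=\tfrac{1}{c_S}F_S^A$, so that $|D_zG|\leq 1$. Since $D$ is linear and $c_S>0$, the two terms on the left-hand side of~\eqref{cond} for $G$ are each $c_S^{-2}$ times the corresponding terms for $F_S^A$; hence their sum is at most $c_S^{-2}aF_S^A=(a/c_S)G$, so that $G$ satisfies~\eqref{cond} with constant $a/c_S$. Theorem~\ref{thm:lowertail} then gives, for every $s\geq 0$,
\begin{align*}
\P(G\leq\E G-s)\leq\exp\left(-\frac{s^2}{2\max(a/c_S,4/3)\,\E G}\right).
\end{align*}

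Finally I would undo the scaling. Putting $r=c_S s$ and using $\E G=\E F_S^A/c_S$ together with the identity $\P(G\leq\E G-s)=\P(F_S^A\leq\E F_S^A-r)$, the exponent simplifies as
\begin{align*}
\frac{(r/c_S)^2}{2\max(a/c_S,4/3)\,\E F_S^A/c_S}=\frac{r^2}{2\,c_S\max(a/c_S,4/3)\,\E F_S^A}=\frac{r^2}{2\max(a,4c_S/3)\,\E F_S^A},
\end{align*}
which is exactly the claimed lower-tail bound. The main (and essentially only) obstacle is this bookkeeping, namely tracking how the powers of $c_S$ entering through the rescaling, through the quadratic functional in~\eqref{cond}, and through the mean combine to turn $\max(a/c_S,4/3)$ into $\max(a,4c_S/3)$.
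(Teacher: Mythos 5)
Your proposal is correct and follows exactly the route the paper takes: Theorem \ref{thm:main} verifies condition \eqref{cond} with the stated $a$, Theorem \ref{thm:tool} gives integrability and the upper tail, and Theorem \ref{thm:lowertail} is applied to the rescaled functional $\tfrac{1}{c_S}F_S^A$ for the lower tail. The paper states this in one line without the bookkeeping; your computation turning $\max(a/c_S,4/3)$ into $\max(a,4c_S/3)$ is the correct fill-in of that omitted detail.
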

The above result immediately implies Theorem \ref{thm:1}.

\subsection{Optimality}
We shall now briefly discuss optimality of the concentration bounds displayed in the above Corollary \ref{cor:conc}. The presented tail bounds are of the form $\exp(-I(r))$ where $I(r)$ is a function such that $\lim_{r\to\infty} I(r)/r^\alpha \in (0,\infty)$ for some $\alpha>0$. While our lower tail estimate has a fast Gaussian decay (meaning that $\alpha = 2$), the upper tail, however, only displays the exponent $\alpha = 1$, and it is natural to ask whether this can be improved.

It seems that the optimal exponent $\alpha$ for the upper tail actually depends on the concrete form of the intensity measure $\mu$ of the underlying Poisson process $\eta$. To give some (more or less vague) evidence for this phenomenon, we can consider first the situation where the intensity measure $\mu$ has bounded support. Then it is easy to see that the component count $F_S^A$ is almost surely bounded from above by some constant $C>0$, so the exponent of $\alpha=1$ that appears in our inequality for the upper tail is certainly not optimal in this case (our bound can then also be used to derive the estimate $\P(F_S^A\geq \E F_S^A + r) \leq \exp(-(a(2 \E F_S^A + C))^{-1}r^2)$, which has a Gaussian decay). On the other hand, if the support of the intensity measure $\mu$ is unbounded, the random variable $F_S^A$ may clearly take arbitrarily large values, meaning that the upper tail is thicker in this situation. Considering the previous observations, it seems likely that the optimal exponent for the upper tail bound depends on $\mu$, where a smaller exponent is to be expected when the mass of the intensity measure is widely spread out on the whole space $\R^d$. A closer investigation of this phenomenon would be an interesting direction for future research.

One approach to judge the quality of the constants appearing in the tail estimates is to compare them with the variance of $F_S^A$. More precisely, via computations similar to those in \cite[Sections 2.3 and 2.4]{blm_book}, one observes that if a random variable $Z$ satisfies for any $r>0$,
\begin{align*}
\P(Z\geq \E Z+r) &\leq \exp\left(-\frac{r^2}{v_1+wr}\right),\\
\P(Z\leq \E Z-r) &\leq \exp\left(-\frac{r^2}{2v_2}\right),
\end{align*}
then $\V Z \leq 2v_2 + 4v_1 + 8w^2$. In settings where the parameters of the model are varied, as it is done in Sections \ref{sec:ABE} and \ref{sec:SLLN} of the present paper, a natural question is now whether the asymptotic behavior of the expressions $2v_2$ and $4v_1+8w^2$  is of the same order as the variance. For the $H$-component counts $J_H$, to the best of the author's knowledge, the only result in the literature containing variance asymptotics is \cite[Proposition 3.8]{P_2003}, which deals with the thermodynamic regime, so we shall restrict our considerations to this case. According to the latter result together with \cite[Proposition 3.3]{P_2003}, one has $\lim_{t\to\infty}\V J_t/t\in(0,\infty)$ and also $\lim_{t\to\infty}\E J_t/t\in(0,\infty)$, where $(J_t = (J_H)_t)_{t\in\N}$ is a sequence of $H$-component counts associated with some appropriate sequence of Poisson processes $(\eta_t)_{t\in\N}$ and radii $(\rho_t)_{t\in\N}$ satisfying $\lim_{t\to\infty}t\rho_t^d \in(0,\infty)$ (see Section \ref{sec:ABE} for further details concerning the notation). Moreover, the constant $a = a_t$, which appears in our tail estimates (and which now of course depends on $t$), is bounded from above and from below in this situation (see the proof of Theorem \ref{thm:stronglaws} (i)). We therefore conclude that both $2v_2$ and $4v_1+8w^2$ are of the same order as the expectation, which in turn is of the same order as the variance.

\subsection{Integrability} \label{sec:Int}
Since the framework of the present paper is not restricted to finite intensity measure Poisson processes, the random variable $F_S^A$ is not necessarily integrable. Note that, according to Corollary \ref{cor:conc}, the assumption $c_S^\mu<\infty$ ensures that integrability of $F_S^A$ is equivalent to almost sure finiteness. In the following, we will characterize integrability (and thus almost sure finiteness) of $F_S^A$. To do so, we first mention that the random variable $F_S^A$ can be written as
\begin{align} \label{decompose}
F_S^A = \sum_{i = 1}^k F_S^{A_i}, \text{ where } A_i = \{\xb \in A, |\xb|=i\}.
\end{align}
Hence, the behavior of $\E F_S^A$ follows from the behavior of the expectations $\E F_S^{A_i}$. We will therefore consider without loss of generality only the case $A\in \Nc^k$ where
\begin{align*}
\Nc^k = \{B\in\Nc : |\xb|=k \text{ for } \xb\in B\},
\end{align*}
meaning that
\begin{align*}
F_S^A = \sum_{\xb\in\Cf^k_S(\eta)} \ind\{\xb\in A\}.
\end{align*}
The upcoming statement characterizes integrability of $F^A_S$ in terms of the integral
\begin{align} \label{expU}
\int_{(\R^d)^k} \ind\{\xb\in A, G_S(\xb) \ \text{is connected}\} d\mu^k(\xb),
\end{align}
where one should notice that here we use the symbol $\xb$ to denote a $k$-tuple in $(\R^d)^k$ instead of a $k$-element subset of $\R^d$. We prefer to not use different symbols for subsets and $k$-tuples since both play a very similar role in the context of the present paper. Moreover, we write $\xb\in A$ for some $\xb=(x_1,\ldots,x_k)\in\R^d$ to indicate that the corresponding subset $\{x_1,\ldots,x_k\}\subset \R^d$ is contained in $A$, and we write $G_S(\xb)$ to denote the graph $G_S(\{x_1,\ldots,x_k\})$.

By means of the Slivnyak-Mecke formula (see e.g. \cite[Corollary 3.2.3]{SW_2008}), the integral in \eqref{expU} coincides with the expectation of the random variable
\begin{align} \label{def:U}
U_S^A = \sum_{\xb\in\eta_{\neq}^k} \ind\{\xb\in A, G_S(\xb) \ \text{is connected}\},
\end{align}
where $\eta_ {\neq}^k$ denotes the set of $k$-tuples $(x_1,\ldots,x_k)\in\eta^k$ such that $x_i\neq x_j$ whenever $i\neq j$.

\begin{prop} \label{prop:integrable}
Let $k\in\N$ and $A\in \Nc^{k}$. Consider the random variable $F_S^A$ and assume that the intensity measure $\mu$ of the Poisson process $\eta$ ensures that $\sigma_S^\mu<\infty$. Then $\E F_S^A<\infty$ if and only if $\E U_S^A<\infty$. Moreover, $\E F_S^A>0$ if and only if $\E U_S^A>0$.
\end{prop}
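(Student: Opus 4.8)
The plan is to relate the component count $F_S^A$ to the U-statistic $U_S^A$ by observing that both count the same geometric configurations, but with a key difference: $U_S^A$ counts every connected $k$-tuple lying in $A$ whose induced graph $G_S(\xb)$ is connected, whereas $F_S^A$ counts such a configuration only when it forms an \emph{isolated} component, i.e. when no further point of $\eta$ is adjacent to it. Since isolation is a restriction, one expects $F_S^A \leq U_S^A/k!$ after accounting for the $k!$ orderings of a $k$-element set (recall that $U_S^A$ sums over ordered tuples in $\eta_{\neq}^k$ while $F_S^A$ sums over unordered subsets). This inequality would immediately give one implication of both equivalences: $\E U_S^A < \infty$ implies $\E F_S^A < \infty$, and $\E F_S^A > 0$ implies $\E U_S^A > 0$.

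For the reverse directions, the idea is to bound $U_S^A$ from above by a constant multiple of $\E F_S^A$, using the uniform control $\sigma_S^\mu < \infty$ on the local mass of $\mu$. First I would apply the Slivnyak-Mecke formula to rewrite $\E F_S^A$ as an integral over ordered tuples $\xb = (x_1,\ldots,x_k) \in (\R^d)^k$ of the probability that $\xb \in A$, that $G_S(\xb)$ is connected, and that the configuration $\{x_1,\ldots,x_k\}$ is isolated in $\eta\cup\{x_1,\ldots,x_k\}$. The isolation event has probability $\exp\bigl(-\mu(N_S(\xb))\bigr)$, where $N_S(\xb)$ denotes the set of points within $S$-distance of some $x_i$; this is the void probability that $\eta$ places no point in the neighborhood $N_S(\xb)$. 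Because $G_S(\xb)$ is connected on $k$ vertices, the whole tuple lies in a bounded region and the measure of its neighborhood satisfies $\mu(N_S(\xb)) \leq k\,\sigma_S^\mu$. Hence the void probability is bounded below by the strictly positive constant $e^{-k\sigma_S^\mu}$, giving
\begin{align*}
\E F_S^A \;\geq\; \frac{e^{-k\sigma_S^\mu}}{k!}\int_{(\R^d)^k} \ind\{\xb\in A,\ G_S(\xb)\ \text{is connected}\}\, d\mu^k(\xb) \;=\; \frac{e^{-k\sigma_S^\mu}}{k!}\,\E U_S^A.
\end{align*}
This lower bound yields the remaining implications: finiteness of $\E F_S^A$ forces finiteness of $\E U_S^A$, and positivity of $\E F_S^A$ forces positivity of $\E U_S^A$.

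The main obstacle I anticipate is making the isolation/void-probability computation rigorous via the Mecke formula, in particular correctly identifying the neighborhood $N_S(\xb)$ and verifying the uniform bound $\mu(N_S(\xb)) \leq k\,\sigma_S^\mu$. The latter follows since $N_S(\xb) \subseteq \bigcup_{i=1}^k (S + x_i)$, so its measure is at most $\sum_{i=1}^k \mu(S+x_i) \leq k\,\sigma_S^\mu$ by definition of $\sigma_S^\mu$; the non-atomicity of $\mu$ ensures the diagonal terms and boundary overlaps contribute nothing problematic. A secondary technical point is the careful bookkeeping between ordered tuples and unordered subsets, which introduces the combinatorial factor $k!$ but does not affect finiteness or positivity. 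Once these bounds are in place, the two equivalences follow directly by sandwiching $\E F_S^A$ between constant multiples of $\E U_S^A$.
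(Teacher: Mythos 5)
Your proposal is correct and takes essentially the same route as the paper: the paper's preliminary lemma is exactly your Slivnyak--Mecke representation of $\E F_S^A$ with the void-probability factor $e^{-\mu\left(\cup_{i=1}^k(S+x_i)\right)}$, which is then sandwiched between $e^{-k\sigma_S^\mu}$ and $1$ to compare $k!\,\E F_S^A$ with $\E U_S^A$. The only cosmetic difference is that the paper obtains the upper bound from $e^{-\mu(\cdot)}\leq 1$ inside the integral rather than from the pointwise inequality $k!\,F_S^A\leq U_S^A$, but these are equivalent.
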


In the case where the intensity measure $\mu$ is absolutely continuous with respect to the Lebesgue measure, we can use the above result to derive the following sufficient condition for integrability of $F_S^A$.

\begin{prop} \label{prop:integrableSuff}
Assume that the intensity measure $\mu$ of the Poisson process $\eta$ has a density $m$ with respect to the Lebesgue measure and that $\sigma_S^\mu<\infty$. Let $k\in\N$ and assume that
\begin{align} \label{cond:integrability}
\int_{\R^d} m(x)^k dx < \infty.
\end{align}
Then the random variable $F_S^A$ is integrable for any $A\in\Nc^k$.
\end{prop}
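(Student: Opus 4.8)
The plan is to reduce the claim, via the integrability criterion already established, to a bound on the integral \eqref{expU}, and then to control that integral by pairing a geometric localization argument with the arithmetic--geometric mean inequality applied to the product of densities.

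First I would invoke Proposition \ref{prop:integrable}. Since $\sigma_S^\mu<\infty$ is assumed, it suffices to show $\E U_S^A<\infty$, and by the Slivnyak--Mecke formula this expectation equals the integral in \eqref{expU}. Bounding the integrand by discarding the $A$-constraint, that is $\ind\{\xb\in A,\ G_S(\xb)\ \text{connected}\}\leq \ind\{G_S(\xb)\ \text{connected}\}$, reduces the task to proving
\[
\int_{(\R^d)^k}\ind\{G_S(\xb)\ \text{is connected}\}\,d\mu^k(\xb)<\infty.
\]
This bound is free of $A$, so it settles the statement for every $A\in\Nc^k$ simultaneously.

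Next I would localize geometrically. Since $B(0,\rho)\subseteq S\subseteq B(0,\theta\rho)$, an edge of $G_S(\xb)$ joining $x_i$ and $x_j$ forces $x_i-x_j\in S$ and hence $\lVert x_i-x_j\rVert\leq \theta\rho$. If $G_S(\xb)$ is connected, any two vertices are linked by a path of at most $k-1$ edges, so the triangle inequality gives $\lVert x_i-x_j\rVert\leq (k-1)\theta\rho$ for all $i,j$. In particular, once $x_1$ is fixed, connectivity forces $x_2,\dots,x_k\in B(x_1,(k-1)\theta\rho)$, and the analogous statement holds with any $x_i$ in the role of $x_1$. Now writing $d\mu^k(\xb)=\prod_{i=1}^k m(x_i)\,dx_i$ and using the arithmetic--geometric mean inequality in the form $\prod_{i=1}^k m(x_i)\leq \tfrac1k\sum_{i=1}^k m(x_i)^k$, I would treat the summand carrying $m(x_i)^k$ by integrating out the other $k-1$ variables first; by the localization these range over a fixed ball, contributing only the constant $\kappa:=\big(\lambda(B(0,(k-1)\theta\rho))\big)^{k-1}$, where $\lambda$ is Lebesgue measure. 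What remains is $\kappa\int_{\R^d}m(x_i)^k\,dx_i$, finite by \eqref{cond:integrability}; summing the $k$ terms gives the bound $\kappa\int_{\R^d}m(x)^k\,dx<\infty$.

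The crux, in my view, is the last step: the product $\prod_i m(x_i)$ cannot be compared with $\int m^k$ directly, and it is the arithmetic--geometric mean split that converts it into separate single-variable integrals, each of which is then tamed by the compactness supplied by connectivity. Verifying that the localization really makes the auxiliary integrations yield only a dimensional constant, independent of the fixed point, is the one place where the geometry of $S$ enters, and it is where I would take the most care.
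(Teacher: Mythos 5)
Your proposal is correct and follows essentially the same route as the paper's own proof: reduction via Proposition \ref{prop:integrable} to the integral \eqref{expU} with the $A$-constraint dropped, the arithmetic--geometric mean bound $\prod_i m(x_i)\leq \sum_i m(x_i)^k$ (the paper omits the harmless factor $1/k$), and the localization $\lVert x_i-x_j\rVert\leq (k-1)\theta\rho$ from connectivity yielding the constant $\lambda(B(0,(k-1)\theta\rho))^{k-1}$. No gaps.
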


\subsection{Asymptotic behavior of the expectation} \label{sec:ABE}
The estimates in Corollary \ref{cor:conc} depend on the expectation of the random variable $F_S^A$, hence it is important for applications to know the asymptotic behavior of this quantity when the parameters of the model are varied. The upcoming results address this issue.

In the following, let again $S\subset\R^d$ be a set as described in Section \ref{s:framework}, where we assume without loss of generality that $B(0,1)\subseteq S \subseteq B(0,\theta)$ for some $\theta\geq 1$. Let $\mu$ be a non-trivial, locally finite and non-atomic measure on $\R^d$ that has a bounded Lebesgue density $m$. In particular, it holds that $\sigma_S^\mu<\infty$. Now, let $(\eta_t)_{t\in\N}$ be a sequence of Poisson point processes on $\R^d$ such that each $\eta_t$ has intensity measure $t\mu$. Also, let $(\rho_t)_{t\in\N}$ be a sequence of positive real numbers such that $\lim_{t\to\infty} \rho_t = 0$. Then for each $t\in\N$, we consider the random geometric graph $G_{\rho_t S}(\eta_t)$ associated with the set $\rho_t S$ and the point process $\eta_t$. For $k\in\N$ and $A\in\Nc^{\leq k}$, we are now interested in the asymptotic behavior of the random variables $F_t^A := F_{\rho_t S}^{\rho_tA}$ that are defined according to \eqref{def:compcount}. It will be assumed in what comes that the set $A$ is \emph{translation invariant}, meaning that $\xb+x\in A$ for any $\xb\in A$ and $x\in\R^d$. As it was pointed out above, it is enough to consider the case where $A\in\Nc^k$.

One prominent setting that is covered by the above framework is obtained by taking $A = \{\xb\in \Nb: G_S(\xb) \cong H\}$ for some fixed connected graph $H$ on $k$ vertices. Then the resulting random variables $F_t^A$ are exactly the $H$-component counts associated with the random geometric graphs $G_{\rho_t S}(\eta_t)$.

In order to apply the dominated convergence theorem in some proofs of the upcoming theorems, we need to make further assumptions on the density $m$. Therefore, we will often assume that $m$ is almost everywhere continuous and that there exist $\alpha,\gamma>0$ such that $m(x)\leq \alpha(\lVert x\rVert +1)^{-\gamma}$ for all $x\in\R^d$. In particular, this ensures that the density $m$ is bounded and if we assume in addition that $\gamma k> d$, then the condition in \eqref{cond:integrability} is verified which particularly guarantees that $F_t^A$ is integrable for all $t\in\N$.

\begin{rem}
The upcoming results for the sparse and thermodynamic regimes are obtained via a straightforward adaptation of the corresponding proofs and results presented in \cite[Chapter 3]{P_2003} to the more general framework of the present paper.
\end{rem}

\subsubsection{Sparse regime}

The behavior of quantities associated with random geometric graphs naturally depends heavily on how fast the sequence $(\rho_t)_{t\in\N}$ decays. Such a dependence can also be observed for the random variables $F_t^A$. We begin our investigation with the so-called \emph{sparse regime} where $t\rho_t^d \to 0$ as $t\to\infty$. In this regime, the asymptotic behavior of $\E F_t^A$ can be related to the random variables $U^A_t = U_{\rho S}^{\rho_t A}$ which are defined as in \eqref{def:U}. A similar phenomenon is described in \cite[Chapter 3]{P_2003}, and in particular in \cite[Proposition 3.2]{P_2003}, for the $H$-component counts built over i.i.d. points. Note that in the next result, we stick to the convention that $0/0=1$.

\begin{prop} \label{prop:UFsparse}
Let $k\in\N$ and $A\in \Nc^k$, where $A$ is translation invariant. Assume that the measure $\mu$ has a bounded Lebesgue density $m$ such that the random variables $F_t^A$ and $U_t^A$ are all integrable. Then, provided that $t\rho_t^d \to 0$ as $t\to\infty$, one has
\begin{align*}
\lim_{t\to\infty} \frac{k!\E F^A_t}{\E U^A_t} = 1.
\end{align*}
\end{prop}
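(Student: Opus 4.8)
The plan is to compute both expectations explicitly via the multivariate Mecke (Slivnyak--Mecke) formula (see e.g. \cite{SW_2008}) and then to compare the two resulting integrals. Since $A\in\Nc^k$, the functional $F_t^A$ counts the size-$k$ components of $G_{\rho_t S}(\eta_t)$ lying in $\rho_t A$, and rewriting the sum over unordered components as a sum over ordered $k$-tuples produces a factor $k!$. Applying Mecke to the configuration $\eta_t+\delta_{x_1}+\cdots+\delta_{x_k}$, the event that $\{x_1,\ldots,x_k\}$ is a component splits into the deterministic condition that $G_{\rho_t S}(\xb)$ is connected together with the requirement that $\eta_t$ place no point in $\bigcup_{i=1}^k(x_i+\rho_t S)$ apart from the $x_i$ themselves. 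Because $\mu$ is non-atomic this finite exceptional set carries no mass, so the void probability is exactly $\exp\bigl(-t\mu(\bigcup_{i=1}^k(x_i+\rho_t S))\bigr)$. I therefore expect to arrive at
\begin{align*}
k!\,\E F_t^A &= \int_{(\R^d)^k} \ind\{\xb\in\rho_t A,\ G_{\rho_t S}(\xb)\text{ connected}\}\,\exp\Bigl(-t\mu\bigl(\textstyle\bigcup_{i=1}^k(x_i+\rho_t S)\bigr)\Bigr)\,d(t\mu)^k(\xb),\\
\E U_t^A &= \int_{(\R^d)^k} \ind\{\xb\in\rho_t A,\ G_{\rho_t S}(\xb)\text{ connected}\}\,d(t\mu)^k(\xb),
\end{align*}
where the second identity is just the direct Mecke computation for $U_t^A$.

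The quotient $k!\,\E F_t^A/\E U_t^A$ is then a weighted average, over the same nonnegative measure $\ind\{\cdots\}\,d(t\mu)^k$, of the weight
\begin{align*}
w_t(\xb) = \exp\Bigl(-t\mu\bigl(\textstyle\bigcup_{i=1}^k(x_i+\rho_t S)\bigr)\Bigr)\in(0,1],
\end{align*}
so that automatically $k!\,\E F_t^A/\E U_t^A\le 1$. For the matching lower bound I would estimate $w_t$ from below \emph{uniformly} in $\xb$. Using subadditivity of $\mu$, the inclusion $S\subseteq B(0,\theta)$, and the boundedness of the density $m$, each term satisfies $\mu(x_i+\rho_t S)\le \lVert m\rVert_\infty\,\kappa_d\,(\theta\rho_t)^d$, where $\kappa_d$ denotes the volume of the unit ball in $\R^d$. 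Hence
\begin{align*}
t\,\mu\bigl(\textstyle\bigcup_{i=1}^k(x_i+\rho_t S)\bigr)\le k\,\lVert m\rVert_\infty\,\kappa_d\,\theta^d\,(t\rho_t^d),
\end{align*}
and consequently $w_t(\xb)\ge \exp\bigl(-k\,\lVert m\rVert_\infty\,\kappa_d\,\theta^d\,(t\rho_t^d)\bigr)=:L_t$ for \emph{every} $\xb$. Averaging this uniform lower bound gives $k!\,\E F_t^A/\E U_t^A\ge L_t$, and since $t\rho_t^d\to 0$ in the sparse regime we have $L_t\to 1$.

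The squeeze $L_t\le k!\,\E F_t^A/\E U_t^A\le 1$ then yields the claim. In the degenerate case $\E U_t^A=0$ one has $\E F_t^A=0$ as well by Proposition \ref{prop:integrable}, and the convention $0/0=1$ makes the ratio consistent with the limit. The genuinely delicate step is the bookkeeping in the first Mecke computation rather than any analytic estimate: one must correctly pass from unordered components to ordered $k$-tuples (the factor $k!$), identify the maximality constraint defining a component as the void event on $\bigcup_{i=1}^k(x_i+\rho_t S)$, and invoke non-atomicity of $\mu$ to discard the finite exceptional set so that the void probability takes the clean exponential form. Once these two integral representations are in place, the convergence is an elementary uniform squeeze driven entirely by $t\rho_t^d\to 0$, and no dominated-convergence argument is needed.
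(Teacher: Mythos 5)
Your proposal is correct and follows essentially the same route as the paper: the two integral representations you derive via the Slivnyak--Mecke formula are exactly the content of Lemma \ref{lem:prelim} and \eqref{expU}, and the paper then performs the identical uniform squeeze $\E U_t^A \geq k!\,\E F_t^A \geq e^{-t\rho_t^d k\lambda(S)\lVert m\rVert_\infty}\E U_t^A$ using $t\rho_t^d\to 0$ (with $\lambda(S)$ in place of your slightly cruder bound $\kappa_d\theta^d$).
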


So, in the sparse regime, the asymptotic behavior of $\E F_t^A$ follows from the asymptotics of the quantities $\E U_t^A$ and the latter sequence is usually easier to analyze. For instance, let $H$ be a connected graph on $k$ vertices and consider the $H$-component counts $(J_H)_t = J_t$ associated with $G_{\rho_t S}(\eta_t)$. Then the corresponding random variables $U_t^A$ are (up to rescaling by $k!$) the induced subgraph counts associated with the graphs $G_{\rho_t S}(\eta_t)$. The limit behavior of the expectation of these subgraph counts is well studied. The statement in \cite[Proposition 3.1]{P_2003} covers for example the case where the measure $\mu$ is finite and has a bounded and almost everywhere continuous Lebesgue density. Generalizing the approach from \cite{P_2003} yields the following result.

\pagebreak

\begin{thm}\label{thm:expsparse}
Let $k \in\N$ and $A\in\Nc^{k}$, where $A$ is translation invariant and consider the random variables $(U_t^A)_{t\in\N}$. Assume that the Lebesgue density $m$ of the measure $\mu$ is almost everywhere continuous and that there are $\alpha,\gamma>0$ where $\gamma k> d$ such that $m(x) \leq \alpha(\lVert x\rVert +1)^{-\gamma}$ for all $x\in\R^d$. Then all $U_t^A$ are integrable and
\begin{align}\label{eq:lim_uta}
\lim_{t\to\infty} \frac{\E U_t^A}{t^k \rho_t^{d(k-1)}} = k! \ \mathfrak{s}_S^A(m) < \infty,
\end{align}
where
\begin{align*}
\mathfrak{s}_S^A(m) := \frac{1}{k!} \int_{\R^d} m(x)^k dx \int_{(\R^d)^{k-1}} I(\xb) d\xb
\end{align*}
and
\begin{align*}
I(\xb)= \ind\{\{0,x_1,\ldots,x_{k-1}\}\in A, G_S(0,x_1,\ldots,x_{k-1}) \ \text{\rm is connected}\}.
\end{align*}
If moreover $t\rho_t^d \to 0$ as $t\to\infty$, then
\begin{align}\label{eq:lim_fta}
\lim_{t\to\infty} \frac{\E F_t^A}{t^k \rho_t^{d(k-1)}} = \mathfrak{s}_S^A(m).
\end{align}
\end{thm}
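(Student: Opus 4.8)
The plan is to compute $\E U_t^A$ exactly via the Slivnyak--Mecke formula, perform a rescaling change of variables to extract the factor $t^k\rho_t^{d(k-1)}$, pass to the limit by dominated convergence to obtain \eqref{eq:lim_uta}, and then derive \eqref{eq:lim_fta} by combining with Proposition~\ref{prop:UFsparse}.

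First I would apply the Slivnyak--Mecke formula to the definition \eqref{def:U} of $U_t^A = U_{\rho_t S}^{\rho_t A}$, which is built over $\eta_t$ with intensity $t\mu$. As it is a sum over ordered $k$-tuples of distinct points, this gives
\[
\E U_t^A = t^k \int_{(\R^d)^k} \ind\{\xb\in\rho_t A,\ G_{\rho_t S}(\xb)\ \text{is connected}\}\prod_{i=1}^k m(x_i)\,d\xb.
\]
I would then substitute $x_i = x_1 + \rho_t z_i$ for $i=2,\dots,k$ (with $z_1:=0$), whose Jacobian contributes exactly $\rho_t^{d(k-1)}$. Since $x_i - x_j = \rho_t(z_i - z_j)$, connectivity of $G_{\rho_t S}(\xb)$ is equivalent to connectivity of $G_S(0,z_2,\dots,z_k)$; and since $A$ is translation invariant, $\xb\in\rho_t A$ (i.e.\ $\xb/\rho_t\in A$) is equivalent to $\{0,z_2,\dots,z_k\}\in A$. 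Both conditions thus collapse into the factor $I(\zb)$ with $\zb=(z_2,\dots,z_k)$, and writing the resulting expression I obtain
\[
\frac{\E U_t^A}{t^k\rho_t^{d(k-1)}} = \int_{\R^d} m(x_1)\int_{(\R^d)^{k-1}} I(\zb)\prod_{i=2}^k m(x_1+\rho_t z_i)\, d\zb\, dx_1.
\]

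The main work, and the step I expect to require the most care, is the limit $t\to\infty$ (so $\rho_t\to0$) by dominated convergence. Two preliminary facts are needed: that $\int_{(\R^d)^{k-1}} I(\zb)\,d\zb<\infty$, which holds because connectivity together with $S\subseteq B(0,\theta)$ forces $\|z_i\|\le(k-1)\theta=:R$ so that $I$ has bounded support; and that $\int_{\R^d} m^k\,dx<\infty$, which follows from $m(x)\le\alpha(\|x\|+1)^{-\gamma}$ and $\gamma k>d$ (this is \eqref{cond:integrability}), giving $\mathfrak{s}_S^A(m)<\infty$. For the dominating function, on the support of $I$ the triangle inequality $\|x_1\|\le\|x_1+\rho_t z_i\|+\rho_t R$ yields $\|x_1\|+1\le(1+\rho_t R)(\|x_1+\rho_t z_i\|+1)$, whence $m(x_1+\rho_t z_i)\le\alpha(1+\rho_t R)^\gamma(\|x_1\|+1)^{-\gamma}$. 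For all $t$ large enough that $\rho_t\le1$ this gives the $t$-uniform bound $m(x_1)\prod_{i=2}^k m(x_1+\rho_t z_i)\le\alpha^k(1+R)^{(k-1)\gamma}(\|x_1\|+1)^{-\gamma k}$, integrable against $I(\zb)\,d\zb\,dx_1$ by the two facts above. Since $m$ is almost everywhere continuous, the integrand converges pointwise a.e.\ to $I(\zb)\,m(x_1)^k$, so dominated convergence yields the limit $\int_{\R^d} m(x_1)^k dx_1\int_{(\R^d)^{k-1}} I(\zb)\,d\zb = k!\,\mathfrak{s}_S^A(m)$, which is \eqref{eq:lim_uta}. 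The same comparison, now with the fixed finite constant $(1+\rho_t R)^{(k-1)\gamma}$ for each individual $t$, shows every $\E U_t^A$ is finite, giving integrability of all $U_t^A$.

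Finally, for \eqref{eq:lim_fta} I would invoke Proposition~\ref{prop:UFsparse}: under $t\rho_t^d\to0$ and integrability of the $F_t^A$ and $U_t^A$, one has $k!\,\E F_t^A/\E U_t^A\to1$. Integrability of $F_t^A$ follows from Proposition~\ref{prop:integrableSuff} since $\int m^k\,dx<\infty$, and that of $U_t^A$ was just established. Writing $\E F_t^A/(t^k\rho_t^{d(k-1)})=\bigl(\E F_t^A/\E U_t^A\bigr)\cdot\bigl(\E U_t^A/(t^k\rho_t^{d(k-1)})\bigr)$ and combining the two limits then gives $\mathfrak{s}_S^A(m)$. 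The only genuine obstacle is securing the uniform dominating function across all large $t$; once the rescaling is set up correctly, the remainder is routine bookkeeping.
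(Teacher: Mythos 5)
Your proposal is correct and follows essentially the same route as the paper: Slivnyak--Mecke (via the identity \eqref{expU}) for the exact expression of $\E U_t^A$, the rescaling $x_i = x_1 + \rho_t z_i$ to extract the factor $t^k\rho_t^{d(k-1)}$, dominated convergence using the bound $(\lVert y\rVert+1)^{-\gamma}\leq(1+\lVert x-y\rVert)^{\gamma}(\lVert x\rVert+1)^{-\gamma}$ together with the bounded support of $I$ and $\gamma k>d$, and finally Proposition~\ref{prop:UFsparse} for \eqref{eq:lim_fta}. The only cosmetic difference is that you restrict to $\rho_t\leq 1$ for the uniform domination, whereas the paper uses $R=(k-1)\theta\sup_t\rho_t$; both are equivalent here.
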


\subsubsection{Thermodynamic regime} \label{ssec:thermo}
We continue with the case where the sequence $t\rho_t^d$ converges to a positive constant, i.e. $t\rho_t^d \to {\rm const}\in(0,\infty)$ as $t\to\infty$. This is referred to as the \emph{thermodynamic regime}. In this regime, one can still analyze the asymptotic behavior of the expectations $\E F_t^A$ for a large class of intensity  measures. For the $H$-component counts, the statement \cite[Proposition 3.3]{P_2003} describes these asymptotics in the case where the intensity measure is finite and has a bounded and almost everywhere continuous Lebesgue density. The proof of the latter result can be adapted to cover also the more general class of random variables $F_t^A$. Note that in the following, the Lebesgue measure on $\R^d$ will be denoted by $\lambda$.

\begin{thm}\label{thm:expthermo}
Let $k \in\N$ and $A\in\Nc^{k}$, where $A$ is translation invariant and consider the random variables $(F_t^A)_{t\in\N}$. Assume that the Lebesgue density $m$ of the measure $\mu$ is almost everywhere continuous and that there are $\alpha,\gamma>0$ where $\gamma k> d$ such that $m(x) \leq \alpha(\lVert x\rVert +1)^{-\gamma}$ for all $x\in\R^d$. Then all $F_t^A$ are integrable and, provided that $t\rho_t^d \to c\in(0,\infty)$ as $t\to\infty$, one has
\begin{align*}
\lim_{t\to\infty} \frac{\E F_t^A}{t} = \mathfrak{t}_S^A(m)<\infty,
\end{align*}
where
\begin{align*}
\mathfrak{t}_S^A(m) := \frac{c^{k-1}}{k!} \int_{(\R^d)^k} I(\xb) m(x_1)^k e^{-c \lambda(S\cup(S+x_2)\cup\ldots\cup(S+x_k))m(x_1)} d\xb
\end{align*}
and
\begin{align*}
I(\xb)= \ind\{\{0,x_2,\ldots,x_k\}\in A, G_S(0,x_2,\ldots,x_k) \ \text{\rm is connected}\}.
\end{align*}
\end{thm}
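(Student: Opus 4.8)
The plan is to first convert $\E F_t^A$ into an exact integral via the multivariate Slivnyak--Mecke formula, then to rescale the configuration around one base point, and finally to pass to the limit by dominated convergence. Since $A\in\Nc^k$, only components of size exactly $k$ contribute, so I would write
\begin{align*}
F_t^A = \frac{1}{k!}\sum_{(y_1,\ldots,y_k)\in(\eta_t)_{\neq}^k} \ind\{\{y_1,\ldots,y_k\}\in\rho_t A\}\,\ind\{G_{\rho_t S}(y_1,\ldots,y_k)\text{ is connected}\}\,h(y_1,\ldots,y_k,\eta_t),
\end{align*}
where $h$ encodes that $\{y_1,\ldots,y_k\}$ has no $G_{\rho_t S}$-edge to $\eta_t\setminus\{y_1,\ldots,y_k\}$. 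Applying Mecke to the intensity $t\mu$, the factor $h$ becomes the void probability that $\eta_t$ avoids $W_{\yb}:=\bigcup_{i=1}^k(y_i+\rho_t S)$, which equals $e^{-t\mu(W_{\yb})}$ since $\mu$ is non-atomic and $S=-S$. This gives
\begin{align*}
\E F_t^A = \frac{1}{k!}\int_{(\R^d)^k} \ind\{\{y_1,\ldots,y_k\}\in\rho_t A\}\,\ind\{G_{\rho_t S}(y_1,\ldots,y_k)\text{ is connected}\}\,e^{-t\mu(W_{\yb})}\,d(t\mu)^k(\yb).
\end{align*}

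Next I would substitute $y_1=x_1$ and $y_i=x_1+\rho_t x_i$ for $i\ge 2$, producing a Jacobian $\rho_t^{d(k-1)}$ and $d(t\mu)^k=t^k m(y_1)\cdots m(y_k)\,dy$. Using $B(0,1)\subseteq S\subseteq B(0,\theta)$ and $S=-S$, the relations $y_i-y_j\in\rho_t S$ turn into $x_i-x_j\in S$, so the connectivity indicator becomes $\ind\{G_S(0,x_2,\ldots,x_k)\text{ is connected}\}$; translation invariance of $A$ turns membership into $\ind\{\{0,x_2,\ldots,x_k\}\in A\}$, and together these are exactly $I(\xb)$. Moreover $W_{\yb}=x_1+\rho_t S_{\xb}$ with $S_{\xb}:=S\cup(S+x_2)\cup\cdots\cup(S+x_k)$, so a further substitution $u=x_1+\rho_t v$ gives $t\mu(W_{\yb})=t\rho_t^d\int_{S_{\xb}}m(x_1+\rho_t v)\,dv$. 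Collecting the prefactors, I obtain
\begin{align*}
\frac{\E F_t^A}{t} = \frac{(t\rho_t^d)^{k-1}}{k!}\int_{(\R^d)^k} I(\xb)\,m(x_1)\prod_{i=2}^{k} m(x_1+\rho_t x_i)\,\exp\!\Big(-t\rho_t^d\!\int_{S_{\xb}}\! m(x_1+\rho_t v)\,dv\Big)\,d\xb.
\end{align*}
The integrability of every $F_t^A$, which legitimizes these steps, follows from Proposition~\ref{prop:integrableSuff} because $\gamma k>d$ forces $\int m^k<\infty$.

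I would then let $t\to\infty$ inside the integral. Since $t\rho_t^d\to c$, the prefactor tends to $c^{k-1}$. For a.e. fixed $x_1$ (a continuity point of $m$) and $\rho_t\to 0$, one has $m(x_1+\rho_t x_i)\to m(x_1)$ and, by a dominated convergence argument over the bounded domain $S_{\xb}$, $\int_{S_{\xb}}m(x_1+\rho_t v)\,dv\to m(x_1)\lambda(S_{\xb})$; hence the integrand converges a.e. to $I(\xb)\,m(x_1)^k\,e^{-c\,m(x_1)\lambda(S_{\xb})}$, precisely the integrand defining $\mathfrak{t}_S^A(m)$.

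The hard part will be producing a $t$-uniform integrable majorant for dominated convergence. Here I would exploit two facts. First, $I(\xb)$ forces $\{0,x_2,\ldots,x_k\}$ to be $G_S$-connected; since each edge has Euclidean length at most $\theta$, every $x_i$ lies in $B(0,(k-1)\theta)$, confining the $x_2,\ldots,x_k$-integration to a fixed bounded set. Second, for large $t$ one has $\rho_t\le 1$, so $\|\rho_t x_i\|\le(k-1)\theta$ on that set, and the bound $m(x)\le\alpha(\lVert x\rVert+1)^{-\gamma}$ yields $m(x_1+\rho_t x_i)\le C(\lVert x_1\rVert+1)^{-\gamma}$ uniformly in $t$ (splitting $\lVert x_1\rVert$ large versus bounded). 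Since the exponential factor is at most $1$, the integrand is dominated by a constant multiple of $(\lVert x_1\rVert+1)^{-\gamma k}\,\ind\{x_2,\ldots,x_k\in B(0,(k-1)\theta)\}$, which is integrable exactly because $\gamma k>d$. Dominated convergence then gives the stated limit and, in particular, $\mathfrak{t}_S^A(m)<\infty$.
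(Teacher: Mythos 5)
Your proposal is correct and follows essentially the same route as the paper: the exact Mecke-formula representation of $\E F_t^A$ (the paper's Lemma \ref{lem:prelim}), the rescaling $(x_1,\ldots,x_k)\mapsto(x_1,x_1+\rho_t x_2,\ldots,x_1+\rho_t x_k)$, and dominated convergence with the majorant $(\lVert x_1\rVert+1)^{-\gamma k}$ restricted to $x_2,\ldots,x_k\in B(0,(k-1)\theta)$. The only cosmetic difference is that you handle the exponent $t\mu(W_{\yb})$ by dominated convergence over the bounded set $S_{\xb}$ at continuity points of $m$, whereas the paper squeezes it between $\inf$ and $\sup$ of $m$ over a shrinking ball around $x_1$; both arguments are equivalent.
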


\subsubsection{Dense regime}
The case where the sequence $t \rho_t^d$ tends to infinity as $t\to\infty$ is commonly referred to as the \emph{dense regime}. In contrast to the situation in the sparse and thermodynamic regimes, the asymptotic behavior of the expectations of $F_t^A$ in the dense regime depends heavily on the concrete form of the intensity measure $\mu$. We will therefore restrict our investigation to the case where the intensity measure is given by a Lebesgue density $m(x)=\alpha(\lVert x\rVert +1)^{-\gamma}$ for some $\alpha,\gamma>0$.

\begin{thm}\label{thm:expdense}
Let $k \in\N$ and $A\in\Nc^{k}$, where $A$ is translation invariant and consider the random variables $(F_t^A)_{t\in\N}$. Assume that the measure $\mu$ is given by a Lebesgue density
\begin{align*}
m(x) = \alpha(\lVert x\rVert +1)^{-\gamma},
\end{align*}
where $\alpha,\gamma > 0$ and $\gamma k>d$. Then the $F_t^A$ are integrable and, provided that $t\rho_t^d \to \infty$ as $t\to\infty$, one has
\begin{align*}
\lim_{t\to\infty} \frac{\E F_t^A}{t(t\rho_t^d)^{d/\gamma - 1}} = \mathfrak{d}_S^A(m) < \infty,
\end{align*}
where
\begin{align*}
\mathfrak{d}_S^A(m) := \frac{\alpha^k}{k!} \int_{(\R^d)^k} I(\xb) \lVert x_1\rVert^{-\gamma k} e^{-\alpha \lVert x_1\rVert^{-\gamma} \lambda \left(S \cup \bigcup_{i=2}^k \left(S + x_i\right)\right)} d\xb
\end{align*}
and
\begin{align*}
I(\xb) = \ind\{\{0,x_2,\ldots,x_k\}\in A, G_S(0,x_2,\ldots,x_k) \ \text{\rm is connected}\}.
\end{align*}
\end{thm}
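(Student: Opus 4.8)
The plan is to evaluate $\E F_t^A$ exactly via the multivariate Slivnyak--Mecke formula, to carry out two successive substitutions, and then to pass to the limit by dominated convergence.

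First I would argue exactly as in the proof of Theorem~\ref{thm:expthermo}. A $k$-subset $\{y_1,\dots,y_k\}\subseteq\eta_t$ contributes to $F_t^A$ precisely when $G_{\rho_t S}(y_1,\dots,y_k)$ is connected, the rescaled condition $\rho_t A$ holds, and no further point of $\eta_t$ lies in $N_t:=\bigcup_{i=1}^k(\rho_t S+y_i)$. As $\eta_t$ has intensity $t\mu$ and $\mu$ is non-atomic, the isolation probability is $e^{-t\mu(N_t)}$, so the Mecke formula gives
\begin{align*}
\E F_t^A=\frac{t^k}{k!}\int_{(\R^d)^k}\ind\{\yb\in\rho_t A,\ G_{\rho_t S}(\yb)\text{ connected}\}\,e^{-t\mu(N_t)}\prod_{i=1}^k m(y_i)\,d\yb.
\end{align*}

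Next I would substitute $y_1=x_1$ and $y_i=x_1+\rho_t x_i$ for $i\ge2$ (Jacobian $\rho_t^{d(k-1)}$). Since $y_i-y_j=\rho_t(x_i-x_j)$, connectivity becomes connectivity of $G_S(0,x_2,\dots,x_k)$, and translation invariance of $A$ reduces the condition to $\{0,x_2,\dots,x_k\}\in A$; together they produce the factor $I(\xb)$. Writing $N_t=x_1+\rho_t W(\xb)$ with $W(\xb)=S\cup\bigcup_{i=2}^k(S+x_i)$ gives $t\mu(N_t)=t\rho_t^d\int_{W(\xb)}m(x_1+\rho_t v)\,dv$. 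The decisive step, which is where the exact power law $m(x)=\alpha(\lVert x\rVert+1)^{-\gamma}$ and $\gamma k>d$ enter, is to rescale the base point by its natural length scale $s_t:=(t\rho_t^d)^{1/\gamma}\to\infty$, i.e.\ to set $x_1=s_t w$ (Jacobian $s_t^d$). Because $s_t^\gamma m(s_t w)=\alpha(\lVert w\rVert+s_t^{-1})^{-\gamma}$ and $s_t^{-1}\rho_t\to0$, each density factor multiplied by $s_t^\gamma$ tends to $\alpha\lVert w\rVert^{-\gamma}$, while $t\rho_t^d=s_t^\gamma$ forces $t\mu(N_t)\to\alpha\lVert w\rVert^{-\gamma}\lambda(W(\xb))$. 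A short computation shows that the accumulated prefactor satisfies $t^k\rho_t^{d(k-1)}s_t^{d-\gamma k}=t(t\rho_t^d)^{d/\gamma-1}$, precisely the claimed normalization; hence $\E F_t^A/\big(t(t\rho_t^d)^{d/\gamma-1}\big)$ equals an integral whose integrand converges pointwise to $I(\xb)\,\alpha^k\lVert w\rVert^{-\gamma k}\,e^{-\alpha\lVert w\rVert^{-\gamma}\lambda(W(\xb))}$, i.e.\ (renaming $w$ as $x_1$) to the integrand defining $\mathfrak{d}_S^A(m)$.

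The hard part will be justifying the interchange of limit and integral through a $t$-uniform integrable majorant. Connectivity together with $S\subseteq B(0,\theta)$ confines $(x_2,\dots,x_k)$ to a bounded set, so only the $w$-integral is at issue. For $\lVert w\rVert\ge1$ the shifts $s_t^{-1}\rho_t x_i$ and the additive $s_t^{-1}$ become negligible, each rescaled density is bounded by a multiple of $\lVert w\rVert^{-\gamma}$, and the product by a multiple of $\lVert w\rVert^{-\gamma k}$, which is integrable at infinity exactly because $\gamma k>d$. The delicate region is $\lVert w\rVert<1$, where the rescaled density product carries a singularity of order $\lVert w\rVert^{-\gamma k}$ at the origin that must be absorbed by the isolation factor. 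Here I would use the lower bound $t\mu(N_t)\ge\beta(\lVert w\rVert+2s_t^{-1})^{-\gamma}$ with $\beta=\alpha\lambda(S)>0$ together with the upper bound $s_t^\gamma m(\cdot)\le\alpha2^\gamma(\lVert w\rVert+s_t^{-1})^{-\gamma}$ for the density factors; writing $q=(\lVert w\rVert+2s_t^{-1})^{-\gamma}$, the product of these bounds is dominated by a constant multiple of $q^k e^{-\beta q}$, which is bounded uniformly in $q>0$. Thus the integrand is bounded by a constant on $\lVert w\rVert<1$, and combining the two regimes furnishes the majorant; the same two estimates also show $\mathfrak{d}_S^A(m)<\infty$. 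I expect this reconciliation of the origin singularity with the super-exponentially small isolation probability to be the main obstacle, while the prefactor bookkeeping and the pointwise limit are routine.
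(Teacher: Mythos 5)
Your proposal is correct and follows essentially the same route as the paper: the Slivnyak--Mecke/void-probability formula for $\E F_t^A$, the change of variables rescaling the base point by $(t\rho_t^d)^{1/\gamma}$ and the relative positions by $\rho_t$ (the paper does this in a single map $\Phi_t$ rather than two steps), and dominated convergence with the same majorant — $\lVert x_1\rVert^{-\gamma k}$ at infinity via $\gamma k>d$, and the boundedness of $z\mapsto z^k e^{-C'z}$ to absorb the origin singularity into the isolation factor. No substantive differences.
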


\subsection{Strong laws of large numbers} \label{sec:SLLN}
Let the conventions of Section \ref{sec:ABE} prevail. The tail bounds from Corollary \ref{cor:conc} together with the asymptotic behavior of the expectation stated in the results from Section \ref{sec:ABE} yield strong laws for the component counts $F_t^A$. Statement (i) of the theorem below complements the results \cite[Theorem 3.15, Theorem 3.16 and Theorem 3.19]{P_2003} that provide similar strong laws for the $H$-component counts associated with random geometric graphs built over i.i.d. points in $\R^d$. In the latter results, the role of the intensity scaling factor $t$ that is used in the present framework is taken over by the number of i.i.d. points. Via this correspondence, the strong laws presented in statement (i) of the upcoming theorem, when specialized to the $H$-component counts, are Poisson-space analogues for the results mentioned above.
 
Penrose's result \cite[Theorem 3.15]{P_2003} provides a strong law for the thermodynamic regime, and \cite[Theorem 3.16]{P_2003} contains a strong law for the sparse regime where $\lim_{t\to\infty}t\rho_t^d = 0$ and $\lim_{t\to\infty} t^{2k-1} \rho_t^{d(2k-2)} / \log(t) = \infty$. The latter condition means that $t^k\rho_t^{d(k-1)}$ grows faster than $\sqrt{t\log(t)}$, which is more restrictive than the condition \eqref{condRegimeST} in the theorem below. The very sparse regime is considered in Penrose's result \cite[Theorem 3.19]{P_2003}, which requires that there is some $\tau>0$ such that for large enough $t$, one has $t^k\rho_t^{d(k-1)} > (\log(t))^{1+\tau}$ and $t \rho_t^d< t^{-\tau}$, and also requires that the sequence $(\rho_t)_{t\in\N}$ is \emph{regularly varying}, meaning that $\lim_{t\to\infty} \rho_{\lfloor ts \rfloor}/\rho_t\in(0,\infty)$ for all $s>0$.

It is worth noting that the result below is independent of the joint distribution of the random variables $F_t^A$, so the almost sure convergence is actually a complete convergence. In contrasts to this, while the results \cite[Theorem 3.15 and Theorem 3.16]{P_2003} also provide complete convergence, the result \cite[Theorem 3.19]{P_2003} only provides almost sure \linebreak convergence for the joint distribution that is obtained when successively adding i.i.d. points in $\R^d$.

\begin{thm} \label{thm:stronglaws}
Let $k \in\N$ and $A\in\Nc^{k}$, where $A$ is translation invariant. Consider the associated sequence of random variables $(F_t^A)_{t\in\N}$. Let the measure $\mu$ be given by a Lebesgue density $m$. Then the following statements hold:
\begin{enumerate}[(i)]
\item \emph{sparse and thermodynamic regime}. Assume that $m$ is almost everywhere continuous and that there are $\alpha,\gamma>0$ where $\gamma k> d$ such that $m(x) \leq \alpha(\lVert x\rVert +1)^{-\gamma}$ for all $x\in\R^d$. Assume in addition that $t\rho_t^d\to {\rm const} \in [0,\infty)$ as $t\to\infty$ and that
\begin{align} \label{condRegimeST}
\lim_{t\to\infty}\frac{t^k\rho_t^{d(k-1)}}{\log(t)}=\infty.
\end{align}
Then, if $\lim_{t\to\infty} t \rho_t^d = 0$, one has
\begin{align*}
\frac{F_t^A}{t^k \rho_t^{d(k-1)}} \overset{a.s.}{\longrightarrow} \mathfrak{s}_S^A(m) \ \text{ as } \ t\to\infty.
\end{align*}
Moreover, if $\lim_{t\to\infty} t \rho_t^d \in (0,\infty)$, one has
\begin{align*}
\frac{F_t^A}{t} \overset{a.s.}{\longrightarrow} \mathfrak{t}_S^A(m) \ \text{ as } \ t\to\infty.
\end{align*}
\item \emph{dense regime}. Let the density $m$ be given by $m(x) = \alpha(\lVert x\rVert + 1)^{-\gamma}$ for some $\alpha,\gamma>0$ and $\gamma k>d$. Assume that $t\rho_t^d\to\infty$ as $t\to\infty$ and that moreover 
\begin{align} \label{condRegimeD}
\lim_{t\to\infty}\frac{t(t\rho_t^d)^{d/\gamma-2}}{\log(t)}=\infty.
\end{align}
Then
\begin{align*}
\frac{F_t^A}{t(t\rho_t^d)^{d/\gamma-1}} \overset{a.s.}{\longrightarrow} \mathfrak{d}_S^A(m) \ \text{ as } \ t\to\infty.
\end{align*}
\end{enumerate}
\end{thm}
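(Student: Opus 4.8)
The plan is to deduce all three strong laws from the concentration inequalities of Corollary~\ref{cor:conc} combined with the expectation asymptotics of Section~\ref{sec:ABE}, via a Borel--Cantelli argument. In each regime write $b_t$ for the normalising sequence in the claim ($b_t = t^k\rho_t^{d(k-1)}$ in the sparse case, $b_t = t$ in the thermodynamic case, $b_t = t(t\rho_t^d)^{d/\gamma-1}$ in the dense case) and let $C$ denote the corresponding limit constant $\mathfrak{s}_S^A(m)$, $\mathfrak{t}_S^A(m)$ or $\mathfrak{d}_S^A(m)$. By Theorems~\ref{thm:expsparse}, \ref{thm:expthermo} and \ref{thm:expdense} we have $\E F_t^A / b_t \to C$, so it suffices to prove that $(F_t^A - \E F_t^A)/b_t \to 0$ almost surely; since the individual tail bounds will turn out to be summable in $t$, this in fact yields complete convergence, which explains the distribution-free nature of the statement.

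First I would pin down the order of the constant $a_t = k(c_{\rho_t S}^2\,\sigma_{\rho_t S}^{t\mu} + 1)$ that enters Corollary~\ref{cor:conc} when applied to $F_t^A = F_{\rho_t S}^{\rho_t A}$. The defining property of $c_S$ is scale invariant, so $c_{\rho_t S} = c_S$ for every $t$. Since $m$ is bounded, say by $M$, one has $\sigma_{\rho_t S}^{t\mu} = t\sup_{x}\mu(\rho_t S + x) \leq M\lambda(S)\, t\rho_t^d$; consequently $a_t = \Theta(1)$ in the sparse and thermodynamic regimes (where $t\rho_t^d$ stays bounded) and $a_t = \Theta(t\rho_t^d)$ in the dense regime. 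The standing hypotheses also guarantee, as already recorded in Section~\ref{sec:ABE} via Proposition~\ref{prop:integrableSuff}, that each $F_t^A$ is integrable and almost surely finite, so Corollary~\ref{cor:conc} genuinely applies to every $F_t^A$.

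Next, fixing $\epsilon>0$ and applying both inequalities of Corollary~\ref{cor:conc} with $r = \epsilon b_t$, I would use $\E F_t^A = (C+o(1))b_t$ to see that the upper-tail denominator $a_t(2\E F_t^A + \epsilon b_t)$ and the lower-tail denominator $2\max(a_t,4c_S/3)\E F_t^A$ are both of order $a_t b_t$ (here $\max(a_t,4c_S/3)=a_t$ eventually in the dense regime, while both quantities are bounded otherwise). Hence in each regime there is a constant $c_\epsilon>0$ such that, for all large $t$,
\begin{align*}
\P\bigl(|F_t^A - \E F_t^A| \geq \epsilon b_t\bigr) \leq 2\exp\left(-c_\epsilon\,\frac{b_t}{a_t}\right).
\end{align*}
Substituting the orders of $a_t$ found above gives $b_t/a_t = \Theta\bigl(t^k\rho_t^{d(k-1)}\bigr)$ in the sparse and thermodynamic regimes and $b_t/a_t = \Theta\bigl(t(t\rho_t^d)^{d/\gamma-2}\bigr)$ in the dense regime, so the exponent is precisely the quantity controlled by the regime-specific growth hypotheses.

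Finally, conditions \eqref{condRegimeST} and \eqref{condRegimeD} are exactly what force $b_t/(a_t\log t)\to\infty$, hence $\P(|F_t^A - \E F_t^A| \geq \epsilon b_t) \leq t^{-(1+\delta)}$ for large $t$ and some $\delta>0$. Summing over $t\in\N$ and applying Borel--Cantelli gives $\P(|F_t^A/b_t - \E F_t^A/b_t| \geq \epsilon \text{ i.o.}) = 0$; letting $\epsilon$ range over a null sequence yields $(F_t^A - \E F_t^A)/b_t \to 0$ almost surely, and combining with $\E F_t^A/b_t \to C$ completes the proof. I expect the main obstacle to be the bookkeeping in the dense regime, where the concentration constant itself diverges: one must verify that the order $a_t = \Theta(t\rho_t^d)$ is correct and that dividing $b_t$ by it produces exactly the exponent $t(t\rho_t^d)^{d/\gamma-2}$ appearing in \eqref{condRegimeD}. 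The sparse and thermodynamic cases are comparatively routine, since there $a_t$ remains bounded and the exponent reduces directly to $b_t$ up to a constant.
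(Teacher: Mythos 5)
Your proposal is correct and follows essentially the same route as the paper's proof: apply Corollary~\ref{cor:conc} with $r=\epsilon b_t$, control $a_t$ via $c_{\rho_t S}=c_S$ and $\sigma_{\rho_t S}^{t\mu}\leq t\rho_t^d\lambda(S)\lVert m\rVert_\infty$, use the expectation asymptotics of Section~\ref{sec:ABE}, and conclude by summability and Borel--Cantelli (the paper packages the last step as Lemma~\ref{BCLem}). The identification of the exponents $b_t/a_t$ in each regime matches the paper exactly.
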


\section{Proofs}

\subsection{Proofs for the concentration inequalities}
The analytic lemma below is used in the upcoming proof of Theorem \ref{thm:lowertail}.

\begin{lem} \label{lem:analytic}
Let $\psi(z) = ze^z - e^z + 1$. Then for any $a>0$ and $z>0$, one has
\begin{align*}
\frac{a\psi(z)/z^ 2}{1+a\psi(z)/z} \leq \frac{\max(a,4/3)}{2}.
\end{align*}
\end{lem}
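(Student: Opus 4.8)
The plan is to prove the inequality
\begin{align*}
\frac{a\psi(z)/z^2}{1+a\psi(z)/z} \leq \frac{\max(a,4/3)}{2}
\end{align*}
by first cleaning up the left-hand side algebraically and then reducing everything to elementary estimates on the function $\psi(z) = ze^z - e^z + 1$ for $z>0$. Writing $p = \psi(z)/z^2 > 0$, the left-hand side equals $ap/(1+azp) = ap/(1 + az^2 p)$ after noting $\psi(z)/z = zp$, so I would express the quantity as $\tfrac{a\psi(z)}{z^2 + a\psi(z)}$. Since this is of the form $\tfrac{a u}{v + a u}$ with $u = \psi(z)>0$ and $v = z^2>0$, it is increasing in $a$; but the bound on the right also grows in $a$, so a single monotonicity argument does not settle it. Instead I would split into the two regimes determined by $\max(a,4/3)$.

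First I would handle the regime $a \leq 4/3$. Here $\max(a,4/3)=4/3$ and it suffices to show $\tfrac{a\psi(z)}{z^2 + a\psi(z)} \leq \tfrac{2}{3}$, i.e. $3a\psi(z) \leq 2z^2 + 2a\psi(z)$, i.e. $a\psi(z) \leq 2z^2$. Since $a\leq 4/3$, it is enough to prove $\tfrac{4}{3}\psi(z) \leq 2z^2$, that is $\psi(z) \leq \tfrac{3}{2} z^2$ for all $z>0$. This is a clean one-variable inequality that I would verify by comparing Taylor coefficients: $\psi(z) = ze^z - e^z + 1 = \sum_{n\geq 2} \tfrac{n-1}{n!} z^n$, whose quadratic term is exactly $\tfrac12 z^2$, and I would check term by term that $\tfrac{n-1}{n!} \leq \tfrac{3}{2}\cdot\tfrac{1}{2}$ fails only cosmetically and is in fact controlled — more robustly, I would show directly that $g(z) = \tfrac32 z^2 - \psi(z)$ satisfies $g(0)=g'(0)=0$ and $g''(z) = 3 - z e^z \geq 0$ is not globally true, so I expect the sharp constant here to be exactly $3/2$ attained in the limit, which the coefficient comparison $\sum_{n\geq 2}\tfrac{n-1}{n!}z^n$ versus $\tfrac32 z^2$ confirms since $\tfrac{n-1}{n!}$ decreases fast enough. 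This coefficient argument is the cleanest route.

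For the regime $a > 4/3$, I have $\max(a,4/3)=a$ and the target becomes $\tfrac{a\psi(z)}{z^2 + a\psi(z)} \leq \tfrac{a}{2}$. Since both sides are positive, this rearranges to $2\psi(z) \leq z^2 + a\psi(z)$, i.e. $(2-a)\psi(z) \leq z^2$. When $a\geq 2$ the left side is nonpositive while the right side is positive, so the inequality is immediate; when $4/3 < a < 2$ it reduces to $\psi(z) \leq \tfrac{1}{2-a} z^2$, and because $\tfrac{1}{2-a} > \tfrac{1}{2-4/3} = \tfrac{3}{2}$, this follows from the very same inequality $\psi(z)\leq \tfrac32 z^2$ established above. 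Thus both regimes collapse to the single scalar estimate $\psi(z)\leq \tfrac32 z^2$.

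The main obstacle is pinning down the sharp scalar bound $\psi(z) \leq \tfrac{3}{2}z^2$ cleanly, since the second-derivative test $g''(z) = 3 - ze^z$ changes sign, so a naive convexity argument does not close it. I would instead rely on the power-series representation $\psi(z)=\sum_{n\geq 2}\tfrac{n-1}{n!}z^n$ and show $\sum_{n\geq 3}\tfrac{n-1}{n!}z^n \leq z^2$ is not what is needed; rather the correct bookkeeping is that the right constant is dictated by matching the leading $\tfrac12 z^2$ term against $\tfrac32 z^2$, leaving slack $z^2 - \sum_{n\geq 3}\tfrac{n-1}{n!}z^n$ which I would verify is eventually negative, signalling that the honest bound the author likely uses is an inequality valid only after the $a$-dependent rearrangement rather than the crude $\psi(z)\le\tfrac32 z^2$ for all $z$. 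Consequently, I anticipate the real work is to treat the combined quantity $(2-a)\psi(z)\le z^2$ and $a\psi(z)\le 2z^2$ as inequalities to be checked against $\psi$'s growth directly, and the genuinely delicate point will be confirming that the worst case occurs as $z\to 0^+$, where $\psi(z)/z^2 \to \tfrac12$, so that the ratio is maximized in that limit and the stated constant is exactly right.
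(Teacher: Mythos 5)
Your proposal contains a decisive algebraic error at the very first step, and everything downstream inherits it. Multiplying numerator and denominator of $\frac{a\psi(z)/z^2}{1+a\psi(z)/z}$ by $z^2$ gives
\begin{align*}
\frac{a\psi(z)}{z^2 + a\,z\,\psi(z)},
\end{align*}
not $\frac{a\psi(z)}{z^2+a\psi(z)}$: the denominator term $a\psi(z)/z$ picks up a factor $z^2$, yielding $az\psi(z)$, and your intermediate identity ``$ap/(1+azp)=ap/(1+az^2p)$'' is false. The correct rearrangement of the target inequality (with $c=\max(a,4/3)/2$) is $a\psi(z)(1-cz)\leq cz^2$, and the factor $(1-cz)$ is not cosmetic: it is negative for $z>1/c$, which is precisely what neutralizes the exponential growth of $\psi(z)=(z-1)e^z+1$ and makes the inequality trivially true for large $z$. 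By dropping the $z$ you lose this mechanism entirely.

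As a consequence, both regimes of your case split reduce to the scalar bound $\psi(z)\leq \tfrac32 z^2$ for all $z>0$, which is simply false ($\psi(3)=2e^3+1>41$ while $\tfrac32\cdot 9=13.5$); indeed no polynomial bound on $\psi$ can hold globally. Your proposed coefficient comparison of $\sum_{n\geq 2}\frac{n-1}{n!}z^n$ against $\tfrac32 z^2$ cannot work, since a full power series with positive coefficients cannot be dominated by its quadratic term for all $z>0$. You do sense this near the end (``eventually negative\dots the honest bound the author likely uses is an inequality valid only after the $a$-dependent rearrangement''), but you leave the matter as speculation rather than repairing the argument, so the proof is not complete. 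The inequality that actually closes the argument is $\psi(z)\left(1-\tfrac23 z\right)\leq \tfrac12 z^2$ — note the damping factor — from which both cases $a\geq 4/3$ and $a<4/3$ follow by comparing $1-cz$ with $1-\tfrac23 z$; that inequality does admit a term-by-term power-series proof, which is the route the paper takes.
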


\begin{proof}
The desired inequality can be rearranged as
\begin{align*}
a \psi(z) (1-cz) \leq cz^2,
\end{align*}
where $c=\max(a,4/3)/2$. It will be established below that
\begin{align} \label{eq:psibound}
\psi(z)\left(1-\tfrac 23 z\right)\leq \tfrac 12 z^2.
\end{align}
In the case $a\geq 4/3$, one has $c=a/2$. Hence, using \eqref{eq:psibound} we obtain
\begin{align*}
a \psi(z) (1-cz) =  a \psi(z) \left(1-\tfrac a2 z\right)\leq a \psi(z) \left(1-\tfrac 23 z\right) \leq  \tfrac a2 z^2 = cz^2,
\end{align*}
so the result holds in this case. Now assume that $a< 4/3$. Then $c=2/3$ and using again \eqref{eq:psibound} yields
\begin{align*}
a \psi(z) (1-cz) =  a \psi(z) \left(1-\tfrac 23 z\right) \leq  \tfrac a2 z^2 \leq cz^2.
\end{align*}
It remains to prove \eqref{eq:psibound}. To do so, we first rearrange this inequality as
\begin{align*}
\tfrac 53 ze^z+1 \leq \tfrac 23 z^2 e^z + e^z + \tfrac 23 z + \tfrac 12 z^2.
\end{align*}
To prove the above, we compute
\begin{align*}
\tfrac 53 z e^z + 1 &= \tfrac 53 \sum_{n\geq 0} \frac{z^{n+1}}{n!} + 1 = \tfrac 23 \sum_{n\geq 0} \frac{z^{n+1}}{n!} + \sum_{n\geq 0} \frac{(n+1)z^{n+1}}{(n+1)!} + 1 \\ &= \tfrac 23 \sum_{n\geq 0} \frac{z^{n+1}}{n!} + \sum_{n\geq 1} n \frac{z^{n+1}}{(n+1)!} + \sum_{n\geq 0} \frac{z^{n+1}}{(n+1)!} + 1 \\ &= \tfrac 23 \sum_{n\geq 1} \frac{z^{n+1}}{n!} + \tfrac 23 z + \sum_{n\geq 2} n \frac{z^{n+1}}{(n+1)!} + \tfrac 12 z^2 + e^z \\ &= \tfrac 23 z^2 \left(1 + \sum_{n\geq 1} \frac{z^{n}}{(n+1)!} + \tfrac 32 \sum_{n\geq 1} (n+1) \frac{z^{n}}{(n+2)!} \right) + e^z + \tfrac 23 z + \tfrac 12 z^2.
\end{align*}
Now, the last expression in the above display can be upper bounded by
\begin{align*}
\tfrac 23 z^2 \left(1 + \sum_{n\geq 1} \frac{z^{n}}{n!}\right) + e^z + \tfrac 23 z + \tfrac 12 z^2 = \tfrac 23 z^2 e^z + e^z + \tfrac 23 z + \tfrac 12 z^2,
\end{align*}
where we used the obvious estimate
\begin{align*}
\frac{1}{(n+1)!} + \frac{3(n+1)}{2(n+2)!} \leq \frac{1}{n!}.
\end{align*}
\qed\end{proof}

The crucial ingredient in the upcoming proof of Theorem \ref{thm:lowertail} is the following logarithmic Sobolev inequality that is a special case of \cite[Proposition 3.1]{BP_2015}, which is in turn obtained by combining Wu's modified logarithmic Sobolev inequality (see \cite[Corollary 2.3]{W_2000}) with the Mecke formula for Poisson processes (see \cite[Satz 3.1]{M_1967}). The result uses the \emph{entropy} of an integrable random variable $Z>0$, defined by
\begin{align*}
\Ent(Z) = \E(Z\log(Z)) - \E(Z)\log(\E Z).
\end{align*}
Note also that for $z\in\R$ we will write $z_- = \ind\{z<0\} z$ and $z_+ = \ind\{z>0\} z$.

\begin{prop} \label{EntIneq}
Let $F:\Nb\to\R$ be a measurable functional and consider the random variable $F(\eta)=F$. Then for all $u \in\R$ satisfying $\E (e^{u F})<\infty$ we have
\[
\Ent(e^{u F}) \leq \E \left[ e^{u F} \left(\int_{\R^d} \psi(u D_x F(\eta)_-) \ d\mu(x) + \sum_{x\in\eta} \phi(-u D_x F(\eta-\delta_x)_+)\right)\right],
\]
where $\phi(z) = e^z - z - 1$ and $\psi(z) = ze^z - e^z + 1$.
\end{prop}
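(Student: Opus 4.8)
The plan is to start from the modified logarithmic Sobolev inequality for Poisson functionals in its abstract form (the case $\Phi(x)=x\log x$ of the general $\Phi$-entropy inequality, which is precisely Wu's inequality \cite[Corollary 2.3]{W_2000}): for any positive, integrable $G=G(\eta)$ one has
\[\Ent(G)\le \E\int_{\R^d}\Big[G(\eta+\delta_x)\log\tfrac{G(\eta+\delta_x)}{G(\eta)}-\big(G(\eta+\delta_x)-G(\eta)\big)\Big]\,d\mu(x).\]
I would apply this with $G=e^{uF}$, which is legitimate because $\E(e^{uF})<\infty$ by hypothesis. Writing $G(\eta+\delta_x)=e^{uF(\eta)}e^{uD_xF(\eta)}$ and $\log(G(\eta+\delta_x)/G(\eta))=uD_xF(\eta)$, the integrand collapses to $e^{uF(\eta)}\,[\,uD_xF(\eta)\,e^{uD_xF(\eta)}-e^{uD_xF(\eta)}+1\,]=e^{uF(\eta)}\psi(uD_xF(\eta))$, so that
\[\Ent(e^{uF})\le \E\int_{\R^d}e^{uF(\eta)}\psi\big(uD_xF(\eta)\big)\,d\mu(x).\]

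Next I would split the integrand according to the sign of the difference operator. Since for every real $q$ exactly one of $q_+,q_-$ is nonzero and equal to $q$ while the other vanishes, and since $\psi(0)=0$, one has the elementary identity $\psi(uq)=\psi(uq_-)+\psi(uq_+)$ for all $u,q\in\R$; in particular this requires no case distinction on the sign of $u$. Applied pointwise this yields
\[\E\int_{\R^d}e^{uF(\eta)}\psi(uD_xF(\eta))\,d\mu(x)=\E\int_{\R^d}e^{uF(\eta)}\psi(uD_xF(\eta)_-)\,d\mu(x)+\E\int_{\R^d}e^{uF(\eta)}\psi(uD_xF(\eta)_+)\,d\mu(x).\]
The first summand is already the negative-part term appearing in the claim, so only the positive-part integral remains to be rewritten.

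For that term I would invoke the Mecke formula \cite[Satz 3.1]{M_1967} in the form $\E\int_{\R^d}g(x,\eta)\,d\mu(x)=\E\sum_{x\in\eta}g(x,\eta-\delta_x)$, which is licit for the nonnegative integrand $g(x,\eta)=e^{uF(\eta)}\psi(uD_xF(\eta)_+)$ regardless of finiteness. This turns the integral into $\E\sum_{x\in\eta}e^{uF(\eta-\delta_x)}\psi(uD_xF(\eta-\delta_x)_+)$. For $x\in\eta$ one has $F(\eta)=F(\eta-\delta_x)+D_xF(\eta-\delta_x)$, hence $e^{uF(\eta-\delta_x)}=e^{uF(\eta)}e^{-uD_xF(\eta-\delta_x)}$, and I can factor out $e^{uF(\eta)}$. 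The crux is then the pointwise algebraic identity
\[e^{-uq}\,\psi(uq_+)=\phi(-uq_+)\qquad(u,q\in\R),\]
which I would verify by the two cases $q\ge0$ (where $uq_+=uq$ and $e^{-z}\psi(z)=z-1+e^{-z}=\phi(-z)$) and $q<0$ (where both sides vanish). With $q=D_xF(\eta-\delta_x)$ this identifies the summand as $e^{uF(\eta)}\phi(-uD_xF(\eta-\delta_x)_+)$, and combining with the first term produces exactly the asserted inequality.

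I expect the only genuinely delicate point to be this last identity together with the bookkeeping that keeps the statement uniform in the sign of $u$: the passage from $\psi$ to $\phi$ is precisely what converts the add-one integral over $\R^d$ into the remove-one sum over $\eta$, and it is the reason the final bound mixes an integral of $\psi$ (negative part, evaluated at $\eta$) with a sum of $\phi$ (positive part, evaluated at $\eta-\delta_x$). Everything else — the computation of the integrand, the trivial split of $\psi$, and the applicability of Mecke to a nonnegative kernel — is routine, and the hypothesis $\E(e^{uF})<\infty$ is exactly what is needed to make the entropy and the modified logarithmic Sobolev inequality meaningful.
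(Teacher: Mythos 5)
Your argument is correct and is precisely the derivation the paper points to: it does not prove Proposition \ref{EntIneq} in-line but cites it as the special case of \cite[Proposition 3.1]{BP_2015} obtained by combining Wu's modified logarithmic Sobolev inequality \cite[Corollary 2.3]{W_2000} with the Mecke formula \cite{M_1967}, which is exactly what you carry out, including the correct sign-splitting of $\psi$ and the identity $e^{-uq}\psi(uq_+)=\phi(-uq_+)$ that converts the add-one integral into the remove-one sum. Nothing further is needed.
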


The proof of Theorem \ref{thm:lowertail} below is very similar to the proof of \cite[Theorem 3.10]{BP_2015} which in turn is an adaptation of the proof of \cite[Theorem 13]{M_2006} for Poisson functionals. For the sake of completeness, we carry out the modified argumentation.

\newproof{po_thm_lowertail}{Proof of Theorem \ref{thm:lowertail}}
\begin{po_thm_lowertail}
For brevity, we will only deal with the case where $F$ is boun\-ded here and remark that the result can be extended to the unbounded case in exactly the same way as in the proof of \cite[Theorem 3.10]{BP_2015}. 

By Proposition \ref{EntIneq} we have for any $u< 0$,
\begin{align*}
\Ent(e^{u F}) &\leq \E \left[ e^{u F} \left(\int_{\R^d} \psi(u D_x F(\eta)_-) \ d\mu(x) + \sum_{x\in\eta} \phi(-u D_x F(\eta-\delta_x)_+)\right)\right].
\end{align*}
Moreover, by assumption we have $|D_x F(\eta)| \leq 1$, thus $u D_x F(\eta)_- \leq -u$. Since $\psi(z)/z^2$ is increasing in $z$, it follows that
\begin{align*}
\int_{\R^d} \psi(u D_x F(\eta)_-) d\mu(x) &=u^2 \int_{\R^d} \frac{\psi(u D_x F(\eta)_-)}{u^2 D_x F(\eta)_-^2} D_x F(\eta)_-^2\ d\mu(x)\\
&\leq u^2 \int_{\R^d} \frac{\psi(-u) }{u^2} D_x F(\eta)_-^2\ d\mu(x).
\end{align*}
Similarly, one also has
\begin{align*}
\sum_{x\in\eta} \phi(-u D_x F(\eta)_+) \leq u^2 \sum_{x\in\eta} \frac{\phi(-u) }{u^2} D_x F(\eta-\delta_x)_+^2.
\end{align*}
Now, since $\phi(-u)\leq \psi(-u)$ for any $u<0$ and since by assumption
\begin{align*}
\int_{\R^d} D_x F(\eta)_-^2\ d\mu(x) + \sum_{x\in\eta} D_x F(\eta-\delta_x)_+^2 \leq aF, 
\end{align*}
it follows that $\Ent(e^{u F}) \leq \psi(-u) a \E(Fe^{u F})$. Dividing this inequality by $u^2\E(e^{u F})$ gives
\begin{align} \label{eq:h_ineq}
h'(u) = \frac{\Ent(e^{u F})}{u^2\E(e^{u F})} \leq \frac{\psi(-u) a \E(Fe^{u F})}{u^2\E(e^{u F})},
\end{align}
where $h(u) = u^{-1} \log \E(e^{uF})$. Let $\nu<0$. Integrating inequality \eqref{eq:h_ineq} from $\nu$ to $0$ and using that, since $\psi(-u)/u^2$ is decreasing in $u$, one has $\psi(-u)/u^2 \leq \psi(-\nu)/\nu^2$ for all $u\in[\nu,0)$, yields
\begin{align*}
\E F - \frac{1}{\nu} \log\E(e^{\nu F}) \leq -\frac{\psi(-\nu)}{\nu^ 2} a \log\E(e^ {\nu F}).
\end{align*}
Since $1- a\psi(-\nu)/\nu$ is positive, we can rearrange the above inequality as
\begin{align*}
\log\E[\exp(\nu(F-\E F))] \leq \nu^2 \frac{a\psi(-\nu)/\nu^2}{1- a\psi(-\nu)/\nu} \E F.
\end{align*}
Moreover, by Lemma \ref{lem:analytic} we have
\begin{align*}
\frac{a\psi(-\nu)/\nu^2}{1- a\psi(-\nu)/\nu} \leq \frac{\max(a,4/3)}{2}.
\end{align*}
The last two displays together with Markov's inequality yield
\begin{align*}
\P(F\leq \E F - r) \leq \E[\exp(\nu(F-\E F))]e^{\nu r} \leq \exp\left(\nu^2 \frac{\max(a,4/3)}{2} \E F + \nu r\right).
\end{align*}
The result is now obtained by an easy optimization in $\nu$.
\qed\end{po_thm_lowertail}

To get prepared for the upcoming proof of Theorem \ref{thm:main}, we first gather some observations in the following lemma.

\begin{lem} \label{lem:observations}
Let $k \in\N$ and $A\in\Nc^{\leq k}$. Consider the functional $F_S^A$ and let $\xi\in\Nb$ be such that $F_S^A(\xi)<\infty$. Then the following statements hold:
\begin{enumerate}[(i)]
\item For any $x\in\R^d$ one has $|D_xF_S^A(\xi)_-| \leq c_S$.
\item If $x\in\R^d$ is such that $x\notin \cup_{y\in\yb} (S+y)$ for all $\yb\in\Cf_S^{\leq k}(\xi)\cap A$, then it follows that $D_xF_S^A(\xi)\geq 0$.
\item For any $x\in\xi$ one has $D_xF_S^A(\xi-\delta_x)_+ \leq 1$. Moreover, if equality holds in the latter inequality, then $x\in\xb$ for some $\xb\in \Cf_S^{\leq k}(\xi)\cap A$.
\end{enumerate}
\end{lem}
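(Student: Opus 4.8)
The plan is to derive all three statements from a single structural description of the effect of adding a point. Fix $\xi\in\Nb$ with $F_S^A(\xi)<\infty$ and $x\in\R^d$. Recalling that two vertices $u,v$ are adjacent in $G_S$ exactly when $u-v\in S$, equivalently $v\in S+u$ (here the symmetry $S=-S$ is used), I would first observe that inserting $x$ alters only those components of $G_S(\xi)$ that contain a neighbour of $x$. Denoting these affected components by $\xb_1,\ldots,\xb_m$ (the $\xb\in\Cf_S(\xi)$ with $\xb\cap(S+x)\neq\emptyset$), they merge together with $x$ into a single new component $\yb=\{x\}\cup\xb_1\cup\cdots\cup\xb_m$ of $G_S(\xi+\delta_x)$, while every other component is left unchanged. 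Since the summands of $F_S^A$ coming from unchanged components cancel in the difference, this yields the bookkeeping identity
\begin{align*}
D_xF_S^A(\xi)=\ind\{\yb\in\Cf_S^{\leq k}(\xi+\delta_x)\cap A\}-\sum_{i=1}^{m}\ind\{\xb_i\in\Cf_S^{\leq k}(\xi)\cap A\},
\end{align*}
in which components of size exceeding $k$ are automatically excluded by the indicators.

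The key quantitative step, which I expect to be the main point of the proof, is the bound $m\leq c_S$ on the number of affected components. I would prove it by choosing, for each $i$, a neighbour $y_i\in\xb_i$ of $x$, so that $z_i:=y_i-x\in S$. As the $\xb_i$ are distinct components, the vertices $y_1,\ldots,y_m$ lie in pairwise different components and are therefore pairwise non-adjacent, i.e. $y_i-y_j\notin S$ whenever $i\neq j$; hence $z_i-z_j=y_i-y_j\notin S$ for $i\neq j$. Thus $z_1,\ldots,z_m\in S$ satisfy $z_i-z_j\notin S$ for all $i\neq j$, so by the very definition of $c_S$ we obtain $m\leq c_S$.

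With the identity and this bound in hand, the three claims follow by inspection. For (i), the right-hand side of the identity is at least $-\sum_{i=1}^m\ind\{\xb_i\in\Cf_S^{\leq k}(\xi)\cap A\}\geq -m\geq -c_S$, so $D_xF_S^A(\xi)\geq -c_S$ and therefore $|D_xF_S^A(\xi)_-|\leq c_S$. For (ii), the hypothesis that $x\notin\cup_{y\in\yb}(S+y)$ for every $\yb\in\Cf_S^{\leq k}(\xi)\cap A$ says precisely that no counted component contains a neighbour of $x$; hence none of the affected $\xb_i$ lies in $\Cf_S^{\leq k}(\xi)\cap A$, the subtracted sum vanishes, and $D_xF_S^A(\xi)=\ind\{\yb\in\Cf_S^{\leq k}(\xi+\delta_x)\cap A\}\geq 0$. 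For (iii), I would apply the identity with $\xi$ replaced by $\xi-\delta_x$, so that $(\xi-\delta_x)+\delta_x=\xi$ and $\yb$ is the component of $G_S(\xi)$ containing $x$; the difference then equals $\ind\{\yb\in\Cf_S^{\leq k}(\xi)\cap A\}$ minus a nonnegative sum, whence $D_xF_S^A(\xi-\delta_x)\leq 1$ and thus $D_xF_S^A(\xi-\delta_x)_+\leq 1$. Finally, equality $D_xF_S^A(\xi-\delta_x)_+=1$ means $D_xF_S^A(\xi-\delta_x)=1$, which, since the subtracted sum is nonnegative, forces $\ind\{\yb\in\Cf_S^{\leq k}(\xi)\cap A\}=1$, i.e. $\yb\in\Cf_S^{\leq k}(\xi)\cap A$ with $x\in\yb$, which is the asserted conclusion with $\xb=\yb$.
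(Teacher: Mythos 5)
Your proof is correct and follows essentially the same route as the paper: the crucial step of bounding the number of affected components by $c_S$ (choosing one neighbour per component and noting the chosen points are pairwise non-adjacent, so their differences lie outside $S$) is exactly the paper's argument for (i), and your treatments of (ii) and (iii) match the paper's set-inclusion reasoning. The only cosmetic difference is that you package all three parts into one explicit merging identity rather than arguing each part separately.
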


\begin{proof}

[Proof of (i)] Let $K$ be the set of elements $\yb\in \Cf_S^{\leq k}(\xi)$ such that there exists a vertex $y\in\yb$ with $y-x\in S$. Adding the point $x$ to the set $\xi$ does not affect all those components of the graph $\mathfrak{G}_S(\xi)$ that correspond to the sets $\yb\in \Cf_S^{\leq k}(\xi) \setminus K$. It follows that $|D^-_xF_S^A(\xi)| \leq |K|$. By definition of $K$, for every $\yb\in K$ we can choose a point $v(\yb)\in\yb$ such that $v(\yb) - x\in S$. Since any distinct $\yb,\yb'\in K$ correspond to two different components of $\mathfrak{G}_S(\xi)$, the vertices $v(\yb), v(\yb')$ are not connected by an edge. Thus, for any distinct $\yb,\yb'\in K$, we have $(v(\yb)-x)-(v(\yb')-x)= v(\yb)-v(\yb')\notin S$. It follows from the definition of $c_S$ that $|K|\leq c_S$.

[Proof of (ii)] The assumption on $x$ ensures that adding the point $x$ to the set $\xi$ does not affect all those components of $G_S(\xi)$ that correspond to the vertex sets in $\Cf_S^{\leq k}(\xi) \cap A$. Hence $\Cf_S^{\leq k}(\xi) \cap A \subseteq \Cf_S^{\leq k}(\xi+\delta_x) \cap A$ and this implies $D_x F_S^A(\xi)\geq 0$.

[Proof of (iii)] Of course, we have $D_x F_S^A(\xi-\delta_x)\leq 1$ since $x$ belongs to exactly one component of $G_S(\xi)$. Moreover, if the component of $G_S(\xi)$ that contains the vertex $x$ does not correspond to a vertex set in $\Cf_S^{\leq k}(\xi)\cap A$, then $\Cf_S^{\leq k}(\xi)\cap A \subseteq \Cf_S^{\leq k}(\xi-\delta_x)\cap A$ and thus $D_x F_S^A(\xi-\delta_x)\leq 0$. This implies the additional statement.
\qed\end{proof}

We are now equipped for the proof of Theorem \ref{thm:main}.

\newproof{po_thm_main}{Proof of Theorem \ref{thm:main}}
\begin{po_thm_main}
The bound on the difference operator $DF_S^A$ follows immediately from Lemma \ref{lem:observations} (i) and (iii).

Using Lemma \ref{lem:observations} (i) we obtain
\begin{align*}
\int_{\R^d} (D_xF_S^A(\eta))^2_- d\mu(x) \leq c_S^2 \mu(\{x\in\R^d : D_xF_S^A(\eta)<0\}).
\end{align*}
According to Lemma \ref{lem:observations} (ii),
\begin{align*}
\mu(\{x\in\R^d : D_xF_S^A(\eta)<0\}) &\leq \mu\left(\cup_{\yb\in\Cf_S^{\leq k}(\eta)\cap A} \cup_{y\in\yb}(S+y)\right)\\ &\leq k \sigma^\mu_S |\Cf_S^{\leq k}(\eta)\cap A|\\ &= k \sigma^\mu_S F_S^A.
\end{align*}
Moreover, by virtue of Lemma \ref{lem:observations} (iii), one has
\begin{align*}
\sum_{x\in\eta} (D_xF_S^A(\eta-\delta_x))_+^2 \leq \left|\cup_{\xb\in\Cf_S^{\leq k}(\eta)\cap A} \xb\right| \leq k |\Cf_S^{\leq k}(\eta)\cap A| = k F_S^A.
\end{align*}
Combining the above estimates yields the result.
\qed\end{po_thm_main}

\subsection{Proofs for the integrability criteria}

We start with a preliminary lemma that is needed in several of the upcoming proofs.

\begin{lem} \label{lem:prelim}
Let $k\in\N$ and $A\in \Nc^{k}$. Then the expectation of the random variable $F_S^A$ is given by
\begin{align*}
\E F_S^A = \frac{1}{k!} \int_{(\R^d)^k} \ind\{\xb\in A, G_S(\xb) \text{ \rm is connected}\} e^{-\mu\left(\cup_{i=1}^k(S+x_i)\right)} d\mu^ k(\xb).
\end{align*}
\end{lem}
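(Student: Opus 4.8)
The plan is to express $F_S^A$ as a sum over ordered $k$-tuples of distinct points of $\eta$ and then apply the multivariate Mecke formula. Since every $k$-element set $\xb\in\Cf_S^k(\eta)\cap A$ corresponds to exactly $k!$ ordered tuples $(x_1,\ldots,x_k)\in\eta_{\neq}^k$, I would first write
\begin{align*}
F_S^A = \frac{1}{k!} \sum_{(x_1,\ldots,x_k)\in\eta_{\neq}^k} h(x_1,\ldots,x_k;\eta),
\end{align*}
where $h(x_1,\ldots,x_k;\eta) = \ind\{\{x_1,\ldots,x_k\}\in\Cf_S^k(\eta)\}\,\ind\{\{x_1,\ldots,x_k\}\in A\}$ is symmetric in its first $k$ arguments. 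The point here is to unfold the condition $\{x_1,\ldots,x_k\}\in\Cf_S^k(\eta)$ into two parts: that $G_S(x_1,\ldots,x_k)$ is connected, and that $\{x_1,\ldots,x_k\}$ forms a \emph{full} component, i.e. that no further point of $\eta$ is adjacent to it. Using $S=-S$, adjacency of a point $y$ to some $x_i$ means $y\in S+x_i$, so the latter requirement is equivalent to $\eta$ having no point other than $x_1,\ldots,x_k$ inside the region $\bigcup_{i=1}^k(S+x_i)$.

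Next I would apply the multivariate Slivnyak--Mecke formula (\cite[Corollary 3.2.3]{SW_2008}) to the nonnegative functional $h$, which gives
\begin{align*}
\E F_S^A = \frac{1}{k!} \int_{(\R^d)^k} \E\, h\Big(x_1,\ldots,x_k;\, \eta + \textstyle\sum_{i=1}^k \delta_{x_i}\Big)\, d\mu^k(\xb).
\end{align*}
Because $\mu$ is non-atomic, the diagonal $\{x_i=x_j \text{ for some } i\neq j\}$ is a $\mu^k$-null set, so it suffices to evaluate the inner expectation for a fixed tuple of pairwise distinct points. For such a tuple, the two deterministic factors $\ind\{G_S(\xb)\text{ connected}\}$ and $\ind\{\xb\in A\}$ pull out of the expectation. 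The remaining component condition requires that $\eta+\sum_i\delta_{x_i}$ has no point in $\bigcup_i(S+x_i)$ apart from the $x_i$ themselves; since $\mu$ is non-atomic, $\eta$ almost surely avoids the fixed finite set $\{x_1,\ldots,x_k\}$, so this is exactly the event $\{\eta(\bigcup_i(S+x_i))=0\}$. By the void probability of the Poisson process this event has probability $e^{-\mu(\cup_i(S+x_i))}$, with the convention $e^{-\infty}=0$ when the measure of the union is infinite. Substituting back yields the claimed identity; note that since $h\geq 0$ the whole computation is valid in $[0,\infty]$, so no integrability hypothesis is needed.

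The main obstacle I anticipate is the careful bookkeeping of the component (isolation) condition: one must verify that, after inserting $x_1,\ldots,x_k$ via Mecke, the requirement ``$\{x_1,\ldots,x_k\}$ is a connected component'' translates precisely into ``$G_S(\xb)$ is connected'' together with the void event on $\bigcup_i(S+x_i)$, and that the added atoms neither create spurious adjacencies among themselves that spoil the count nor coincide with pre-existing points of $\eta$. The non-atomicity of $\mu$ is exactly what makes these almost-sure identifications and the diagonal-negligibility legitimate, and it is the place where one should argue most carefully.
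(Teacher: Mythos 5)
Your proposal is correct and follows essentially the same route as the paper: rewrite $F_S^A$ as $\tfrac{1}{k!}$ times a sum over ordered tuples in $\eta_{\neq}^k$, observe that ``$G_S(\xb)$ is a connected component'' decomposes into ``$G_S(\xb)$ is connected'' plus the void event on $\bigcup_{i=1}^k(S+x_i)$, and conclude via the Slivnyak--Mecke formula and the Poisson void probability $e^{-\mu(\cup_i(S+x_i))}$. The extra care you take with the diagonal and with $\eta$ almost surely avoiding the inserted points is implicit in the paper's argument and does no harm.
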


\begin{proof}
First note that for any $\xi\in\Nb$ and $\xb=(x_1, \ldots,x_k)\in(\R^d\setminus \xi)_{\neq}^k$, the property that $G_S(\xb)$ is a connected component of $G_S(\xi \cup \{x_1,\ldots,x_k\})$ is equivalent to
\begin{align*}
G_S(\xb) \text{ is connected and } \xi \cap \left(\cup_{i=1}^k (S+x_i)\right) = \emptyset.
\end{align*}
Thus, using the Slivnyak-Mecke formula (see e.g. \cite[Corollary 3.2.3]{SW_2008}) together with the Fubini theorem, we obtain
\begin{align*}
\E F_S^A &= \E \sum_{\xb\in\Cf^k_S(\eta)} \ind\{\xb\in A\}\\ &= \frac{1}{k!}\E \sum_{\xb\in\eta_{\neq}^k} \ind\{\xb\in A, G_S(\xb) \text{ is connected component of } G_S(\eta)\}\\
&= \frac{1}{k!} \int_{(\R^d)^k} \ind\{\xb\in A, G_S(\xb) \text{ is connected}\} \P\left(\eta \cap \left(\cup_{i=1}^k (S+x_i)\right) = \emptyset\right) d\mu^k(\xb).
\end{align*}
Moreover, since $\eta$ is a Poisson process, the random variable $\left|\eta \cap \left(\cup_{i=1}^k (S+x_i)\right)\right|$ is Poisson distributed with mean $\mu\left(\cup_{i=1}^k (S+x_i)\right)$, hence
\begin{align*}
\P\left(\eta \cap \left(\cup_{i=1}^k (S+x_i)\right) = \emptyset\right) = e^{-\mu\left(\cup_{i=1}^k (S+x_i)\right)}.
\end{align*}
\qed\end{proof}

\newproof{po_prop_integrable}{Proof of Proposition \ref{prop:integrable}}
\begin{po_prop_integrable}
For any element $\xb = (x_1,\ldots,x_k) \in(\R^d)^k$ one has that $\mu\left(\cup_{i=1}^k (S+x_i)\right) \leq k \sigma_S^\mu < \infty$ and thus
\begin{align*}
1\geq e^{-\mu\left(\cup_{i=1}^k (S+x_i)\right)} \geq e^{-k\sigma_S^\mu}>0.
\end{align*}
Using this, the result follows from Lemma \ref{lem:prelim} together with the fact that the expectation of $U_S^A$ is given by \eqref{expU}.
\qed\end{po_prop_integrable}

\newproof{po_prop_integrableSuff}{Proof of Proposition \ref{prop:integrableSuff}}
\begin{po_prop_integrableSuff}
We deduce from Proposition \ref{prop:integrable} that integrability of $F_S^A$ is implied by finiteness of the integral
\begin{align*}
\int_{(\R^d)^k} \ind\{G_S(\xb) \text{ \rm is connected}\} m^{\otimes k}(\xb)d\xb,
\end{align*}
where
\begin{align*}
m^{\otimes k}(\xb) = m^{\otimes k}(x_1,\ldots,x_k) = \prod_{i=1}^k m(x_i).
\end{align*}
Denote by $\lambda$ the Lebesgue measure on $\R^d$. To see that the above integral is finite, we compute
\begin{align}\label{mkcomputation}
&\int_{(\R^d)^k} \ind\{G_S(\xb) \text{ is connected}\} m^{\otimes k}(\xb)d\xb \\ \nonumber &\leq \int_{(\R^d)^k} \sum_{i=1}^k m(x_i)^k \ \ind\{G_S(\xb) \text{ \rm is connected}\} d\xb \\ \nonumber  &= \sum_{i=1}^k \int_{(\R^d)^k} m(x_i)^k \ \ind\{G_S(\xb) \text{ \rm is connected}\} d\xb.
\end{align}
Now, if $G_S(\xb)$ is connected for some $\xb=(x_1,\ldots,x_k)\in(\R^d)^k$, then it follows that $\lVert x_i-x_j\rVert \leq (k-1)\theta\rho$ for all $i,j\in\{1,\ldots,k\}$. Hence, the last expression in the above display is upper bounded by
\begin{align*}
& \sum_{i=1}^k \int_{(\R^d)^k} m(x_i)^k \ \ind\{x_j \in B(x_i, (k-1) \theta\rho) \text{ for all } j\} d\xb \\ &=  \sum_{i=1}^k \int_{\R^d} m(x_i)^k \ \lambda\left(B(0, (k-1) \theta\rho)\right)^{k-1}dx_i\\ &= k\lambda\left(B(0, (k-1) \theta\rho)\right)^{k-1} \int_{\R^d} m(x)^k dx < \infty.
\end{align*}
\qed\end{po_prop_integrableSuff}

\subsection{Proofs for the asymptotic behavior of the expectation}

\newproof{po_prop_UFsparse}{Proof of Proposition \ref{prop:UFsparse}}
\begin{po_prop_UFsparse}
Since the Lebesgue density $m$ of $\mu$ is assumed to be boun\-ded, we have $t\mu\left(\cup_{i=1}^k (\rho_t S+x_i)\right) \leq t \rho_t^d k \lambda(S) \lVert m \rVert _\infty$, where $\lambda$ denotes Lebesgue measure on $\R^d$. Hence, it follows from Lemma \ref{lem:prelim} together with \eqref{expU} that
\begin{align*}
\E U_t^A \geq k! \E F_t^A \geq e^{-t \rho_t^d k \lambda(S) \lVert m \rVert _\infty}\E U_t^A.
\end{align*}
Dividing this inequality by $\E U_t^A$ and taking the limit $t\to\infty$ yields the result since by assumption $\lim_{t\to\infty
}t\rho_t^d \to 0$.
\qed\end{po_prop_UFsparse}

Apart from minor modifications, the two upcoming proofs are very similar to the proofs of \cite[Proposition 3.1 and Proposition 3.3]{P_2003} as well as \cite[Theorem 4.2 (i)]{BR_2015}. We will therefore present these proofs very briefly just for the sake of completeness.

\newproof{po_thm_expsparse}{Proof of Theorem \ref{thm:expsparse}}
\begin{po_thm_expsparse}
Integrability of all $U_t^A$ follows from Proposition \ref{prop:integrable} and Pro\-position \ref{prop:integrableSuff} together with the assumption $\gamma k > d$. The expectation of $U_t^A$ is given by the integral in \eqref{expU}. The change of variables
\begin{align*}
\Phi_t:(x_1,\ldots,x_k)\mapsto (x_1,x_1+\rho_t x_2, \ldots, x_1+\rho_t x_k)
\end{align*}
together with translation invariance of the set $A$ yields
\begin{align*}
\E U_t^A = t^k \rho_t^{d(k-1)} \int_{(\R^d)^k} I(\xb)m^{\otimes k}(\Phi_t(\xb)) d\xb,
\end{align*}
where
\begin{align*}
I(\xb) &= \ind\{\{0,x_2, \ldots, x_k\}\in A, G_ {S}(0,x_2,\ldots,x_k) \text{ \rm is connected}\}.\\
\end{align*}
Since $m$ is almost everywhere continuous, for almost every $\xb\in(\R^d)^k$, we have
\begin{align*}
\lim_{t\to\infty} m^{\otimes k}(\Phi_t(\xb)) = m(x_1)^k.
\end{align*}
Moreover, observe that for any $x,y\in\R^d$,
\begin{align*}
(\lVert y\rVert + 1)^{-\gamma} \leq (1 + \lVert x - y \rVert)^\gamma (\lVert x\rVert + 1)^{-\gamma}.
\end{align*}
Using this together with the assumption $m(x)\leq \alpha(\lVert x \rVert +1)^{-\gamma}$ and the fact that $I(\xb)\neq 0$ implies $\lVert x_i \rVert \leq (k-1)\theta$ for $i=2,\ldots,k$, we obtain
\begin{align*}
&I(\xb) m^{\otimes k}(\Phi_t(\xb)) \leq (1 + R)^{(k-1)\gamma} \alpha^k I(\xb) (\lVert x_1\rVert + 1)^{-\gamma k},
\end{align*}
where $R=(k-1)\theta\sup_{t\in\N} \rho_t$. Since by assumption $\gamma k > d$, the right hand side in the above display is integrable. Hence, \eqref{eq:lim_uta} follows from the dominated convergence theorem. The statement in \eqref{eq:lim_fta} now follows from \eqref{eq:lim_uta} together with Proposition \ref{prop:UFsparse}.\qed\end{po_thm_expsparse}

\newproof{po_thm_expthermo}{Proof of Theorem \ref{thm:expthermo}}
\begin{po_thm_expthermo}
Integrability of the random variables $F_t^A$ follows from Pro\-position \ref{prop:integrableSuff} together with the assumption $\gamma k > d$. Using Lemma \ref{lem:prelim} together with the change of variables
\begin{align*}
\Phi_t: (x_1,\ldots,x_k) \mapsto (x_1,x_1+\rho_t x_2,\ldots,x_1+\rho_t x_k)
\end{align*}
and translation invariance of the set $A$, one obtains
\begin{align} \label{eq:intTrans2}
\E F_t^A = \frac{t^k\rho_t^{d(k-1)}}{k!} \int_{(\R^d)^k} I(\xb) e^{-t J_t(\xb)} m^{\otimes k}(\Phi_t(\xb)) d\xb,
\end{align}
where
\begin{align*}
I(\xb) &= \ind\{\{0,x_2, \ldots, x_k\}\in A, G_ {S}(0,x_2,\ldots,x_k) \text{ \rm is connected}\},\\
J_t(\xb) &= \mu\left(\left(\rho_tS + x_1\right) \cup \bigcup_{i=2}^k \left(\rho_tS + x_1 + \rho_t x_i\right) \right).
\end{align*}
To analyze the limit behavior of $J_t(\xb)$, let
\begin{align*}
m_t^{\rm max} = \sup_{y\in B(x_1, k\theta \rho_t)} m(y) \ \ \text{ and } \ \ m_t^{\rm min} = \inf_{y\in B(x_1, k\theta \rho_t)} m(y).
\end{align*}
Then, for $\xb\in(\R^d)^k$ satisfying $I(\xb)\neq 0$, we have
\begin{align*}
m_t^{\rm min} t  V_t \leq tJ_t(\xb) \leq m_t^{\rm max} t V_t,
\end{align*}
where
\begin{align*}
V_t = \lambda\left(\left(\rho_tS + x_1\right) \cup \bigcup_{i=2}^k \left(\rho_tS + x_1 + \rho_t x_i\right)\right).
\end{align*}
By translation invariance and homogeneity of the Lebesgue measure $\lambda$, one has
\begin{align*}
V_t = \rho_t^d \lambda\left(S\cup \bigcup_{i=2}^k (S+x_i)\right).
\end{align*}
Taking into account that, since $m$ is almost everywhere continuous, both $m_t^{\rm min}$ and $m_t^{\rm max}$ converge to $m(x_1)$ as $t\to\infty$ for a.e. $x_1\in\R^d$, and that moreover by assumption $\lim_{t\to\infty} t\rho_t^d = c \in(0,\infty)$, it follows from the last three displays that for a.e. element $\xb\in(\R^d)^k$ satisfying $I(\xb)\neq 0$, we have
\begin{align*}
\lim_{t\to\infty} t J_t(\xb) = c \lambda\left(S\cup \bigcup_{i=2}^k (S+x_i)\right) m(x_1).
\end{align*}
Using again that $m$ is almost everywhere continuous, we derive from this that for a.e. $\xb\in(\R^d)^k$, the limit of the integrand in \eqref{eq:intTrans2} is given by
\begin{align*}
\lim_{t\to\infty} I(\xb) e^{-t J_t(\xb)} m^{\otimes k}(\Phi_t(\xb)) = I(\xb) m(x_1)^k e^{-c \lambda(S\cup(S+x_2)\cup\ldots\cup(S+x_k)) m(x_1)}.
\end{align*}
Moreover, since $I(\xb) e^{-t J_t(\xb)} m^{\otimes k}(\Phi_t(\xb))\leq I(\xb)m^{\otimes k}(\Phi_t(\xb))$, we can continue in the same way as in the proof of Theorem \ref{thm:expsparse} to deduce that the dominated convergence theorem applies. The result follows.
\qed\end{po_thm_expthermo}

\newproof{po_thm_expdense}{Proof of Theorem \ref{thm:expdense}}
\begin{po_thm_expdense}
To shorten the notation a bit, we will only consider the case $\alpha=1$. The general case is obtained entirely analogously.

First note that integrability of all $F_t^A$ is immediate from Proposition \ref{prop:integrableSuff} together with the assumption $\gamma k> d$. It follows from Lemma \ref{lem:prelim} together with the change of variables
\begin{align*}
\Phi_t: (x_1,\ldots,x_k) \mapsto ((t\rho_t^ d)^{1/\gamma}x_1, (t\rho_t^d)^{1/\gamma}x_1 + \rho_t x_2, \ldots, (t\rho_t^ d)^{1/\gamma}x_1 + \rho_t x_k)
\end{align*}
that
\begin{align} \label{eq:intTrans}
\nonumber k! \E F_t^A &= \int_{(\R^d)^k} \ind\{\xb\in \rho_tA, G_{\rho_tS}(\xb) \text{ \rm is connected}\} e^{-t\mu\left(\cup_{i=1}^k(\rho_tS+x_i)\right)} d(t\mu)^k(\xb)\\
&= \frac{t^k (t\rho_t^d)^{d/\gamma} \rho_t^{d(k-1)}}{(t\rho_t^d)^k}\int_{(\R^d)^k} (t\rho_t^d)^k I(\xb) e^{-tJ_t(\xb)} m^{\otimes k}(\Phi_t(\xb))d\xb,
\end{align}
where
\begin{align*}
I(\xb) &= \ind\{\Phi_t(\xb)\in\rho_t A, G_{\rho_t S}(\Phi_t(\xb))\text{ \rm is connected}\},\\
J_t(\xb) &= \mu\left(\left(\rho_tS+(t\rho_t^d)^{1/\gamma} x_1\right) \cup \bigcup_{i=2}^k \left(\rho_tS+(t\rho_t^d)^{1/\gamma} x_1 + \rho_t x_i\right) \right).
\end{align*}
The reason why $I$ is not indexed by $t$ is that $I$ is actually independent of $t$. Indeed, by translation invariance of $A$, one has
\begin{align*}
&\ind\{\Phi_t(\xb)\in\rho_t A, G_{\rho_t S}(\Phi_t(\xb))\text{ \rm is connected}\}\\ &= \ind\{\{0,x_2,\ldots,x_k\}\in A, G_{S}(0,x_2,\ldots,x_k)\text{ \rm is connected}\}.
\end{align*}
Moreover,
\begin{align} \label{eq:bounddens}
(t\rho_t^d)^k m^{\otimes k}(\Phi_t(\xb)) = (\lVert x_1\rVert + (t\rho_t^d)^{-1/\gamma})^{-\gamma} \prod_{i=2}^k (\lVert x_1 + \rho_t(t\rho_t^d)^{-1/\gamma} x_i \rVert + (t\rho_t^d)^{-1/\gamma})^{-\gamma}.
\end{align}
From this together with $\lim_{t\to\infty}t\rho_t^d = \infty$, we obtain
\begin{align} \label{eq:limdens}
\lim_{t\to\infty}  (t\rho_t^d)^k m^{\otimes k}(\Phi_t(\xb)) = \lVert x_1\rVert^{-\gamma k}.
\end{align}
We continue with the investigation of the limit behavior of $I(\xb)e^{-tJ_t(\xb)}$. Observe that for any $x,y\in\R^d$, one has
\begin{align*}
m(y) \leq (1 + \lVert x - y \rVert)^\gamma m(x).
\end{align*}
Moreover, the assumption $S\subseteq B(0,\theta)$ ensures that $I(\xb) = 1$ implies $\lVert x_i\rVert \leq (k-1) \theta$ for all $i=2,\ldots,k$. It follows that, if $I(\xb)=1$, then
\begin{align*}
J_t(\xb) &= \int_{\R^d}\ind\left\{y\in \left(\rho_tS+(t\rho_t^d)^{1/\gamma} x_1\right) \cup \bigcup_{i=2}^k \left(\rho_tS+(t\rho_t^d)^{1/\gamma} x_1 + \rho_t x_i\right)\right\} m(y) dy\\
&\leq \lambda \left(\left(\rho_tS+(t\rho_t^d)^{1/\gamma} x_1\right) \cup \bigcup_{i=2}^k \left(\rho_tS+(t\rho_t^d)^{1/\gamma} x_1 + \rho_t x_i\right)\right) c_t m\left((t\rho_t^d)^{1/\gamma} x_1\right),
\end{align*}
where $\lambda$ denotes Lebesgue measure on $\R^d$ and
\begin{align*}
c_t = (1+k\theta\rho_t)^\gamma.
\end{align*}
Now, translation invariance and homogeneity of $\lambda$ give
\begin{align*}
\lambda \left(\left(\rho_tS+(t\rho_t^d)^{1/\gamma} x_1\right) \cup \bigcup_{i=2}^k \left(\rho_tS+(t\rho_t^d)^{1/\gamma} x_1 + \rho_t x_i\right)\right) = \rho_t^d \lambda \left(S \cup \bigcup_{i=2}^k \left(S + x_i\right)\right).
\end{align*}
Combining the last three displays yields
\begin{align*}
t J_t(\xb) \leq (1+k\theta\rho_t)^\gamma \left(\lVert x_1 \rVert + (t\rho_t^d)^{-1/\gamma}\right)^{-\gamma} \lambda \left(S \cup \bigcup_{i=2}^k \left(S + x_i\right)\right).
\end{align*}
Similarly, one also obtains
\begin{align} \label{eq:tJlower}
t J_t(\xb) \geq (1+k\theta\rho_t)^{-\gamma} \left(\lVert x_1 \rVert + (t\rho_t^d)^{-1/\gamma}\right)^{-\gamma} \lambda \left(S \cup \bigcup_{i=2}^k \left(S + x_i\right)\right).
\end{align}
Taking into account that $\lim_{t\to\infty} (1+k\theta\rho_t)^{\gamma} = \lim_{t\to\infty} (1+k\theta\rho_t)^{-\gamma} = 1$ and that $\lim_{t\to\infty} t\rho_t^d = \infty$, we derive from the last two displays together with \eqref{eq:limdens} that the integrand of the integral in \eqref{eq:intTrans} converges,
\begin{align} \label{eq:limintegrand}
\lim_{t\to\infty} (t\rho_t^d)^k I(\xb) e^{-tJ_t(\xb)} m^{\otimes k}(\Phi_t(\xb)) = I(\xb) \lVert x_1\rVert^{-\gamma k} e^{-\lVert x_1\rVert^{-\gamma} \lambda \left(S \cup \bigcup_{i=2}^k \left(S + x_i\right)\right)}.
\end{align}
We aim to use the dominated convergence theorem. For this, we first conclude that if $I(\xb)\neq 0$ for some $\xb=(x_1,\ldots,x_k)\in(\R^d)^k$, then for $i=2,\ldots,k$, one has
\begin{align*}
&\frac{(\lVert x_1 + \rho_t(t\rho_t^d)^{-1/\gamma} x_i\rVert + (t\rho_t^d)^{-1/\gamma})^{-\gamma}}{(\lVert x_1\rVert + (t\rho_t^d)^{-1/\gamma})^{-\gamma}} = \left(\frac{\lVert (t\rho_t^d)^{1/\gamma} x_1\rVert + 1}{\lVert (t\rho_t^d)^{1/\gamma} x_1 + \rho_t x_i\rVert + 1}\right)^{\gamma}\\ &\leq \left(\frac{\lVert (t\rho_t^d)^{1/\gamma} x_1 + \rho_t x_i\rVert + \rho_t (k-1)\theta + 1}{\lVert (t\rho_t^d)^{1/\gamma} x_1 + \rho_t x_i\rVert + 1}\right)^{\gamma} \leq (1 + \rho_t (k-1)\theta)^\gamma.
\end{align*}
Observe that it follows from the above estimate together with \eqref{eq:tJlower} and \eqref{eq:bounddens} that there are constants $C, C'>0$, such that for all $t\in\N$,
\begin{align*}
(t\rho_t^d)^k I(\xb) e^{-tJ_t(\xb)} m^{\otimes k}(\Phi_t(\xb)) \leq C I(\xb) (\lVert x_1\rVert + (t\rho_t^d)^{-1/\gamma})^{-\gamma k} e^{-C' (\lVert x_1\rVert + (t\rho_t^d)^{-1/\gamma})^{-\gamma}}.
\end{align*}
Now, since the map $z\mapsto z^k e^{-C' z}$ is bounded above by some constant $C''>0$, we obtain that for any $t\in\N$,
\begin{align*}
(t\rho_t^d)^k I(\xb) e^{-tJ_t(\xb)} m^{\otimes k}(\Phi_t(\xb)) \leq C I(\xb) \min\left(\lVert x_1\rVert^{-\gamma k}, C''\right).
\end{align*}
By assumption we have $\gamma k>d$. Also, for any $\xb=(x_1,\ldots,x_k)\in(\R^d)^k$, one has $I(\xb) = 0$ if $\lVert x_i\rVert > (k-1)\theta$ for some $i\in\{2,\ldots,k\}$. This implies that the right hand side in the above display is integrable with respect to $\lambda^k$. The result follows from the dominated convergence theorem together with \eqref{eq:intTrans} and \eqref{eq:limintegrand}.
\qed\end{po_thm_expdense}

\subsection{Proofs for the strong laws of large numbers}
We will use the following well known result which is a consequence of the Borel-Cantelli lemma (see e.g. \cite[Theorem 3.18]{Ka_2002}).

\begin{lem} \label{BCLem}
Consider a sequence $(X_n)_{n\in\N}$ of real random variables and let $(a_n)_{n\in\N}$ be a sequence of real numbers such that $\lim_{n\to\infty} a_n = a$ for some $a\in\R$. Assume that
\begin{align*}
\sum_{n=1}^\infty \P(|X_n - a_n|\geq\epsilon) < \infty \ \text{ for any } \epsilon>0.
\end{align*}
Then
\begin{align*}
X_n \overset{a.s.}{\longrightarrow} a \ \ \text{as} \ \ n\to\infty.
\end{align*}
\end{lem}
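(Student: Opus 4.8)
The plan is to apply the first Borel--Cantelli lemma at each fixed threshold and then combine the resulting almost-sure statements along a sequence of thresholds shrinking to zero. First I would fix $\epsilon>0$ and consider the events $E_n^\epsilon = \{|X_n - a_n|\geq\epsilon\}$. By hypothesis $\sum_{n\in\N} \P(E_n^\epsilon) < \infty$, so the Borel--Cantelli lemma gives $\P(\limsup_{n} E_n^\epsilon) = 0$. In other words, outside a null set, $|X_n - a_n| < \epsilon$ holds for all sufficiently large $n$, so that $\limsup_{n\to\infty} |X_n - a_n| \leq \epsilon$ almost surely.

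Next I would upgrade this to convergence to zero. Applying the previous step with $\epsilon = 1/m$ for each $m\in\N$ yields null sets $N_m$ such that $\limsup_{n} |X_n - a_n| \leq 1/m$ off $N_m$. Since $N := \bigcup_{m\in\N} N_m$ is a countable union of null sets, one has $\P(N)=0$, and off $N$ the bound $\limsup_{n} |X_n - a_n| \leq 1/m$ holds simultaneously for all $m\in\N$. Letting $m\to\infty$ then gives $\limsup_{n} |X_n - a_n| = 0$, i.e. $|X_n - a_n| \to 0$ almost surely.

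Finally I would bring in the deterministic convergence $a_n\to a$ through the triangle inequality: off the null set $N$,
\[
|X_n - a| \leq |X_n - a_n| + |a_n - a| \longrightarrow 0 \quad \text{as } n\to\infty,
\]
whence $X_n \overset{a.s.}{\longrightarrow} a$.

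As for the main difficulty, I do not anticipate a genuine obstacle, this being the textbook Borel--Cantelli route to almost-sure convergence. The only point that deserves care is that Borel--Cantelli produces, a priori, a separate probability-one event for each $\epsilon$; to obtain a single event of full probability on which convergence holds, one must intersect countably many such events, which is precisely why the threshold is taken along the sequence $(1/m)_{m\in\N}$ rather than ranging over all real $\epsilon$ simultaneously.
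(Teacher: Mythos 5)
Your proof is correct and follows exactly the route the paper itself relies on: the paper does not spell out an argument but simply cites this as a standard consequence of the Borel--Cantelli lemma, which is precisely your fixed-$\epsilon$ application of Borel--Cantelli, the countable intersection over $\epsilon = 1/m$, and the triangle inequality to pass from $a_n$ to $a$. Nothing is missing.
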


\newproof{po_thm_stronglaws}{Proof of Theorem \ref{thm:stronglaws}}
\begin{po_thm_stronglaws}

[Proof of (i)] It follows from the concentration estimates in Corollary \ref{cor:conc} that for $\epsilon>0$,
\begin{align*}
\P\left(\left|\frac{F_t^A}{t^k \rho_t^{d(k-1)}} - \frac{\E F_t^A}{t^k \rho_t^{d(k-1)}}\right| \geq \epsilon\right) \leq p_u(t,\epsilon) + p_l(t,\epsilon),
\end{align*}
where
\begin{align*}
p_u(t,\epsilon) &= \exp\left(-\frac{t^k\rho_t^{d(k-1)}\epsilon^2}{a_t\left(2\frac{\E F_t^A}{t^k\rho_t^{d(k-1)}} +  \epsilon\right)}\right),\\
p_l(t,\epsilon) &= \exp\left(-\frac{(t^k\rho_t^{d(k-1)})^2\epsilon^2}{2 \max(a_t,4c_{\rho_t S} / 3) \E F_t^A}\right)
\end{align*}
and
\begin{align*}
a_t = k\left(c_{\rho_tS}^2 \sigma_{\rho_tS}^{t\mu}+ 1\right).
\end{align*}
Observe that $c_{\rho_tS} = c_S$. Moreover,
\begin{align*}
\sigma_{\rho_t S}^{t\mu} = \sup_{x\in\R^d} t\mu(\rho_t S + x) \leq t\rho_t^d \lambda(S) \lVert m\rVert_\infty
\end{align*}
and since by assumption $t\rho_t^d \to {\rm const} \in[0,\infty)$ as $t\to\infty$, it follows that the sequence $a_t$ is bounded. According to Theorem \ref{thm:expsparse} and Theorem \ref{thm:expthermo}, the sequence $\E F_t^A/(t^k\rho_t^{d(k-1)})$ converges to some $b\in[0,\infty)$. We conclude that there are constants $C>0$ and $C'>0$ such that for all $t\in\N$,
\begin{align*}
p_u(t,\epsilon) + p_l(t,\epsilon) \leq  \exp\left(-t^k\rho_t^{d(k-1)} C\right) + \exp\left(-t^k\rho_t^{d(k-1)} C'\right).
\end{align*}
Now, condition \eqref{condRegimeST} ensures that
\begin{align*}
\sum_{t\in\N} (p_u(t,\epsilon) + p_l(t,\epsilon)) <\infty.
\end{align*}
Invoking Lemma \ref{BCLem} and noting again that $\lim_{t\to\infty} \E F_t^A/(t^k\rho_t^{d(k-1)}) = b$ yields the result.
\smallskip

[Proof of (ii)]
Using again the concentration estimates of Corollary \ref{cor:conc} similarly as in the proof of (i), we obtain that for $\epsilon>0$,
\begin{align*}
\P\left(\left|\frac{F_t^A}{t(t\rho_t^d)^{d/\gamma-1}} - \frac{\E F_t^A}{t(t\rho_t^d)^{d/\gamma-1}}\right| \geq \epsilon\right) \leq p_u(t,\epsilon) + p_l(t,\epsilon),
\end{align*}
where
\begin{align*}
p_u(t,\epsilon) &= \exp\left(-\frac{t(t\rho_t^d)^{d/\gamma-1}\epsilon^2}{a_t\left(2\frac{\E F_t^A}{t(t\rho_t^d)^{d/\gamma-1}} +  \epsilon\right)}\right),\\
p_l(t,\epsilon) &= \exp\left(-\frac{(t(t\rho_t^d)^{d/\gamma-1})^2\epsilon^2}{2 \max(a_t,4c_{S} / 3) \E F_t^A}\right)
\end{align*}
and
\begin{align*}
a_t = k\left(t\rho_t^d c_{S}^2 \lambda(S) \lVert m\rVert_\infty+ 1\right).
\end{align*}
By Theorem \ref{thm:expdense}, the sequence $\E F_t^A / (t(t\rho_t^d)^{d/\gamma - 1})$ converges to $\mathfrak{d}_S^A(m)<\infty$. Using this together with the assumption $\lim_{t\to\infty}t\rho_t^d = \infty$, we conclude that there are constants $C,C'>0$ such that for sufficiently large $t$,
\begin{align*}
p_u(t,\epsilon) + p_l(t,\epsilon) \leq  \exp\left(-t(t\rho_t^d)^{d/\gamma-2} C\right) + \exp\left(-t(t\rho_t^d)^{d/\gamma-2} C'\right).
\end{align*}
Hence, the assumption \eqref{condRegimeD} ensures
\begin{align*}
\sum_{t\in\N} (p_u(t,\epsilon) + p_l(t,\epsilon)) <\infty.
\end{align*}
Recall that $\lim_{t\to\infty} \E F_t^A / (t(t\rho_t^d)^{d/\gamma - 1}) = \mathfrak{d}_S^A(m)$ and apply Lemma \ref{BCLem} to obtain the result.
\qed\end{po_thm_stronglaws}

\section{Acknowledgments}

The author thanks Günter Last for suggesting the topic of the present paper. Moreover, the author thanks Giovanni Peccati and Matthias Reitzner for many useful remarks that helped improving the presentation of the results. The author is partially supported by the German Research Foundation DFG-GRK 1916.

\bibliography{references}

\begin{thebibliography}{10}
\expandafter\ifx\csname url\endcsname\relax
  \def\url#1{\texttt{#1}}\fi
\expandafter\ifx\csname urlprefix\endcsname\relax\def\urlprefix{URL }\fi

\bibitem{AL_2000}
C.~An{\'e}, M.~Ledoux, On logarithmic {S}obolev inequalities for continuous
  time random walks on graphs, Probab. Theory Related Fields 116~(4) (2000)
  573--602.

\bibitem{BP_2015}
S.~Bachmann, G.~Peccati, Concentration bounds for geometric poisson
  functionals: Logarithmic sobolev inequalities revisited, Preprint, 2015,
  \href{http://arxiv.org/abs/1504.03138}{arXiv: 1504.03138 [math.PR]}.

\bibitem{BR_2015}
S.~Bachmann, M.~Reitzner, Concentration for poisson u-statistics: Subgraph
  counts in random geometric graphs, Preprint, 2015,
  \href{http://arxiv.org/abs/1504.07404}{arXiv: 1504.07404 [math.PR]}.

\bibitem{BW_2015}
A.~Barghi, P.~Winkler, Firefighting on a random geometric graph, Random
  Structures Algorithms 46~(3) (2015) 466--477.

\bibitem{BL_1998}
S.~G. Bobkov, M.~Ledoux, On modified logarithmic {S}obolev inequalities for
  {B}ernoulli and {P}oisson measures, J. Funct. Anal. 156~(2) (1998) 347--365.

\bibitem{B_1996_1}
H.~H. Bock, Probabilistic models in cluster analysis, Comput. Statist. Data
  Anal. 23~(1) (1996) 5 -- 28.

\bibitem{B_1996_2}
H.~H. Bock, Probability models and hypotheses testing in partitioning cluster
  analysis, in: Clustering and classification, World Sci. Publ., River Edge,
  NJ, 1996, pp. 377--453.

\bibitem{BLM_2003}
S.~Boucheron, G.~Lugosi, P.~Massart, Concentration inequalities using the
  entropy method, Ann. Probab. 31~(3) (2003) 1583--1614.

\bibitem{blm_book}
S.~Boucheron, G.~Lugosi, P.~Massart, Concentration inequalities, Oxford
  University Press, Oxford, 2013, a non\-asymptotic theory of independence,
  With a foreword by Michel Ledoux.

\bibitem{CGY_2009}
G.~Chen, T.~Guo, C.~Yao, Some properties for the largest component of random
  geometric graphs with applications in sensor networks, Acta Math. Appl. Sin.
  Engl. Ser. 25~(4) (2009) 579--592.

\bibitem{DF_2011}
L.~Decreusefond, E.~Ferraz, On the one dimensional {P}oisson random geometric
  graph, J. Probab. Stat. (2011) Art. ID 350382, 21 pp.

\bibitem{DFR_2014}
L.~Decreusefond, E.~Ferraz, H.~Randriambololona, A.~Vergne, Simplicial homology
  of random configurations, Adv. in Appl. Probab. 46~(2) (2014) 325--347.

\bibitem{GG_2015}
G.~Ganesan, Infection spread in random geometric graphs, Adv. in Appl. Probab.
  47~(1) (2015) 164--181.

\bibitem{G_1961}
E.~N. Gilbert, Random plane networks, J. Soc. Indust. Appl. Math. 9 (1961)
  533--543.

\bibitem{H_2013}
M.~Haenggi, Stochastic geometry for wireless networks, Cambridge University
  Press, Cambridge, 2013.

\bibitem{H_1972}
R.~Hafner, The asymptotic distribution of random clumps, Computing (Arch.
  Elektron. Rechnen) 10 (1972) 335--351.

\bibitem{K_2011}
M.~Kahle, Random geometric complexes, Discrete Comput. Geom. 45~(3) (2011)
  553--573.

\bibitem{KM_2013}
M.~Kahle, E.~Meckes, Limit theorems for {B}etti numbers of random simplicial
  complexes, Homology Homotopy Appl. 15~(1) (2013) 343--374.

\bibitem{Ka_2002}
O.~Kallenberg, Foundations of modern probability, Probability and its
  Applications (New York), 2nd ed., Springer-Verlag, New York, 2002.

\bibitem{LP_2013_1}
R.~Lachi{\`e}ze-Rey, G.~Peccati, Fine {G}aussian fluctuations on the {P}oisson
  space, {I}: contractions, cumulants and geometric random graphs, Electron. J.
  Probab. 18 (2013) no. 32, 32 pp.

\bibitem{LP_2013_2}
R.~Lachi{\`e}ze-Rey, G.~Peccati, Fine {G}aussian fluctuations on the {P}oisson
  space {II}: rescaled kernels, marked processes and geometric
  {$U$}-statistics, Stochastic Process. Appl. 123~(12) (2013) 4186--4218.

\bibitem{M_2006}
A.~Maurer, Concentration inequalities for functions of independent variables,
  Random Structures Algorithms 29~(2) (2006) 121--138.

\bibitem{M_1967}
J.~Mecke, Station\"are zuf\"allige {M}a{\ss}e auf lokalkompakten {A}belschen
  {G}ruppen, Z. Wahrscheinlichkeitstheorie und Verw. Gebiete 9 (1967) 36--58.

\bibitem{MP_2010}
S.~Muthukrishnan, G.~Pandurangan, Thresholding random geometric graph
  properties motivated by ad hoc sensor networks, J. Comput. System Sci. 76~(7)
  (2010) 686--696.

\bibitem{P_2003}
M.~D. Penrose, Random geometric graphs, vol.~5 of Oxford Studies in
  Probability, Oxford University Press, Oxford, 2003.

\bibitem{RST_2013}
M.~Reitzner, M.~Schulte, C.~Th{\"a}le, Limit theory for the gilbert graph,
  Preprint, 2013, \href{http://arxiv.org/abs/1312.4861}{arXiv: 1312.4861
  [math.PR]}.

\bibitem{SW_2008}
R.~Schneider, W.~Weil, Stochastic and integral geometry, Probability and its
  Applications (New York), Springer-Verlag, Berlin, 2008.

\bibitem{W_2000}
L.~Wu, A new modified logarithmic {S}obolev inequality for {P}oisson point
  processes and several applications, Probab. Theory Related Fields 118~(3)
  (2000) 427--438.

\bibitem{YSA_2014}
D.~Yogeshwaran, E.~Subag, R.~J. Adler, Random geometric complexes in the
  thermodynamic regime, Preprint, 2014,
  \href{http://arxiv.org/abs/1403.1164}{arXiv: 1403.1164 [math.PR]}.

\end{thebibliography}

\end{document}